\newtheorem{theorem}{Theorem}
\newtheorem{corollary}{Corollary}
\newtheorem{lemma}{Lemma}
\newtheorem{proposition}{Proposition}
\newenvironment{proof}[1][Proof]{\noindent\textbf{#1.} }{\ \rule{0.5em}{0.5em}}
\begin{document}

\title{On second order q-difference equations for high--order Sobolev-type
q-Hermite orthogonal polynomials}
\author{{Carlos Hermoso}$^{1}$, {Edmundo
J. Huertas}$^{1}$, {Alberto Lastra %
}$^{1}$, {Anier Soria-Lorente %
}$^{2}$ \\
\\
$^{1}$Departamento de Física y Matemáticas, Universidad de Alcalá\\
Ctra. Madrid-Barcelona, Km. 33,600\\
28805 - Alcalá de Henares, Madrid, Spain\\
carlos.hermoso@uah.es, edmundo.huertas@uah.es, alberto.lastra@uah.es\\
\\
$^{2}$Departamento de Tecnología, Universidad de Granma\\
Ctra. de Bayamo-Manzanillo, Km. 17,500\\
85100 - Bayamo, Cuba\\
asorial@udg.co.cu, asorial1983@gmail.com}
\maketitle

\begin{abstract}
The q-Hermite I-Sobolev type polynomials of higher order are consider for
their study. Their hypergeometric representation is provided together with
further useful properties such as several structure relations which give
rise to a three--term recurrence relation of their elements. Two different
q-difference equations satisfied by the q-Hermite I-Sobolev type polynomials
of higher order are also established.

\vspace{0.3cm}

\textit{Key words and phrases.} Orthogonal polynomials, discrete Sobolev
polynomials, q-Hermite polynomials, q-difference equation.

\textrm{2010 AMS Subject Classification. 33C45, 33C47.}

\end{abstract}



\section{Introduction}

\label{S1-Intro}



The \textit{$q$-Hermite I polynomials} or \textit{(continuous) $q$-Hermite
polynomials} of degree $n$, usually denoted in the literature as $H_{n}(x;q)$%
, are a family of $q$-hypergeometric polynomials introduced at the end of
the nineteenth century by L. J. Rogers, in his memoirs on expansions of
certain infinite products (see \cite{R1893-1}, \cite{R1894-2}, and \cite%
{R1895-3}). In this series of papers, and along with these polynomials,
Rogers introduced the \textit{$q$-ultraspherical polynomials} (also known as 
\textit{Rogers} or \textit{Rogers--Askey--Ismail polynomials}), and he used
both families to prove the celebrated \textit{Rogers-Ramanujan identities}.
The $q$-Hermite polynomials are also related to the \textit{Rogers-Szeg\H{o}
polynomials} (see \cite{Sz1926}), and to other important families such as
the \textit{Al-Salam--Chihara polynomials} (see \cite{AC1976}). Throughout
the twentieth century, these polynomials were studied by Szeg\H{o} (see \cite%
{Sz1926}) and Carlitz (see \cite{C1956}, \cite{C1957}, and \cite{C1972}),
and now they have received a strong impulse due to their deep involvement in
different fields, appearing in classical probability theory (\cite{BMW2005}, 
\cite{BMW2008}, \cite{BWA2010}, \cite{MS2002}), non-commutative probability
(see \cite{A2005}, \cite{B2012}, \cite{V2000}), combinatorics (see \cite%
{ISV1987}, \cite{IRS1999}, and \cite{KSZ2005}) and quantum-physics (see \cite%
{FLV1995}, \cite{FLV1997}, \cite{HW1999}, \cite{OS2008}, \cite{OR1994}). For
a recent and comprehensive survey on the subject, see \cite{S2013}.

\smallskip

The $q$-Hermite polynomials are usually defined by means of their generating
function%
\begin{equation}
\sum_{n=0}^{\infty }{H_{n}(x;q)\dfrac{t^{n}}{(q;q)_{n}}}=\prod_{n=0}^{\infty
}{\dfrac{1}{1-2xtq^{n}+t^{2}q^{2n}},}  \label{e1}
\end{equation}%
where $q$ stands for their unique parameter, for which we assume that $0<q<1$%
, which means that they belong to the class of orthogonal polynomial
solutions of certain second order $q$-difference equations, known in the
literature as the Hahn class (see \cite{H1949}, \cite{KLS2010}). A recent
study of their zeros and other interesting properties has been carried out
in \cite{RK-2021M}. This sequence of polynomials lie at the bottom of the
Askey-scheme of hypergeometric orthogonal polynomials, and they are
orthogonal with respect to the measure $d\mu =(qx,-qx;q)_{\infty }d_{q}x$.
For $q=1$ one recovers the classical Hermite polynomials and, for $q=0$ one
recovers the re-scaled Chebyshev polynomials of the second kind.

\smallskip

On the other hand, the so called \textit{Sobolev orthogonal polynomials}
refer to those families of polynomials orthogonal with respect to inner
products involving positive Borel measures supported on infinite subsets of
the real line, and also involving regular derivatives. When these
derivatives appear only on function evaluations on a finite discrete set,
the corresponding families are called \textit{Sobolev-type} or \textit{%
discrete Sobolev} orthogonal polynomial sequences. For a recent and
comprehensive survey on the subject, we refer to \cite{KLS2010} and the
references therein. At the end of the twentieth century, H. Bavinck
introduced the study of inner products involving differences (instead of
regular derivatives) in uniform lattices on the real line (see~\cite{MX2015}%
, \cite{MS2002}, \cite{OS2008}, and also \cite{OR1994} for recent results on
this topic). By analogy with the continuous case, these are also called
Sobolev-type or discrete Sobolev inner products. We also refer to~\cite%
{B1995}, \cite{B1995gen}, \cite{B1996} and \cite{HS2019}.

\smallskip

In the present work, we focus on a Sobolev type family of orthogonal
polynomials associated to the $q$-Hermite I polynomials. More precisely, we
study the sequence of $q$-Hermite I-Sobolev type polynomials of higher order 
$\{\mathbb{H}_{n}(x;q)\}_{n\geq 0}$, which are orthogonal with respect to
Sobolev-type inner product%
\begin{equation*}
\left\langle f,g\right\rangle _{\lambda} =\left\langle f,g\right\rangle
+\lambda({\mathscr D}_{q}^{j}f)(\alpha)({\mathscr D}_{q}^{j}g)(\alpha),
\label{piSobA}
\end{equation*}
where $\left\langle \cdot,\cdot\right\rangle$ stands for the inner product
associated to the orthogonality related to the monic $q-$Hermite I
polynomials, $\lambda$ stands for a fixed positive real number, $\alpha$ is
a real number outside the support of the measure $d\mu$, i.e., of absolute
value larger than one, $j\in\mathbb{N}$, and ${\mathscr D}_{q}^{j}$ stands
for the $j$-th iterate of the Euler-Jackson $q$-difference operator (see (%
\ref{eulerjackson})). To the best of our knowledge, when $\alpha $ is an arbitrary real number
outside of $[-1,1]$, i.e. the mass point is located outside the support of
the measure, the study of these higher-order $q$-discrete Sobolev
polynomials has not been carried out in the literature. Thus, the major
reason for this study is to bring together several algebraic and analytic
properties of the polynomials in $\{\mathbb{H}(x;q)\}_{n\geq 0}$ unknown so
far. Hence, the main results of the work state connection formulas relating
the families $\{H_{n}(x;q)\}_{n\geq 0}$ and $\{\mathbb{H}(x;q)\}_{n\geq 0}$,
and the hypergeometric character of the $q$-Hermite I-Sobolev type
polynomials of higher order. We also obtain a fundamental three-term
recurrence formula with rational coefficients associated to these
polynomials, and two different $q$-difference equations satisfied by them.

\smallskip

The paper is structured as follows. In Section~\ref{sec2}. we recall the
basic facts and definitions from $q$-calculus and some of the main elements
associated to the $q$-Hermite I polynomials. In Section~\ref{S3-ConnForm},
we define the $q$-Hermite I-Sobolev type polynomials of higher order and
state some properties relating them to the $q$-Hermite I polynomials via
connection formulas, and also providing their hypergeometric representation.
In Section~\ref{secpral} we state the main results of the present work,
namely the formulation of a three term recurrence relation for the $q$%
-Hermite I-Sobolev type polynomials of higher order (Theorem~\ref%
{S4-Theor3TRR-RC}), together with two different second order $q$-difference
equations satisfied by these polynomials (Theorem~\ref{sordDEqI} and Theorem~%
\ref{sordDEqII}). The paper concludes with a section of conclusions, further
remarks and examples.



\section{Preliminaries}

\label{sec2}



This preliminary section is devoted to recall the main results from $q$%
-calculus, and the definition and main properties of the $q$-Hermite I
polynomials. We have decided to include them for the sake of completeness.



\subsection{Results from $q$-calculus}



We begin by providing some background and definitions from $q$-calculus, for
the sake of completeness. We refer to~\cite{KLS2010} for further details.

Given $n\in\mathbb{N}$, the $q$-number $[n]_{q}$, is given by 
\begin{equation*}
\lbrack n]_{q}=%
\begin{cases}
\displaystyle0, & \mbox{ if }n=0, \\ 
&  \\ 
\displaystyle\frac{1-q^{n}}{1-q}=\sum_{0\leq k\leq n-1}q^{k}, & \mbox{ if }%
n\geq 1.%
\end{cases}%
\end{equation*}
It makes sense to extend the previous definition to $n\in\mathbb{Z}$ by $%
[n]_{q}=(1-q^n)/(1-q)$. From the previous definition, a $q$-analogue of the
factorial of $n$ can be stated by 
\begin{equation*}
[n]_q!=%
\begin{cases}
\displaystyle 1, & \mbox{if } n=0, \\ 
&  \\ 
\displaystyle \lbrack n] _{q}[n-1] _{q} \cdots [2]_{q}[1]_{q}, & \mbox{if }
n\geq 1.%
\end{cases}%
\end{equation*}
In addition to this, we will make use of a $q$-analogue of the Pochhammer
symbol, or shifted factorial, which is given by 
\begin{equation*}
\left( a;q\right) _{n}= 
\begin{cases}
\displaystyle 1, & \mbox{if } n=0, \\ 
&  \\ 
\displaystyle\prod_{0\leq j\leq n-1}\left( 1-aq^{j}\right) , & \mbox{if }
n\geq 1, \\ 
&  \\ 
\displaystyle (a;q)_\infty= \prod_{j\geq 0}(1-aq^{j}), & \mbox{if } n=\infty%
\mbox{ and } |a|<1.%
\end{cases}%
\end{equation*}
Let us fix the following notation 
\begin{equation*}
(a_{1},\ldots ,a_{r};q) _{k}= \prod_{1\leq j\leq r}(a_{j};q) _{k}.
\end{equation*}

Given two finite sequences of complex numbers $\left\{ a_{i}\right\}
_{i=1}^{r}$ and $\left\{ b_{j}\right\}_{i=1}^{s}$ under the assumption that $%
b_{j}\neq q^{-n}$ with $n\in \mathbb{N}$, for $j=1,2,\ldots ,s$, the basic
hypergeometric, or $q$-hypergeometric, $_{r}\phi _{s}$ series with variable $%
z$ is defined by 
\begin{equation*}
_{r}\phi _{s}\left( 
\begin{array}{c}
a_{1},a_{2},\ldots ,a_{r} \\ 
b_{1},b_{2},\ldots ,b_{s}%
\end{array}
;q,z\right) =\sum_{k\geq 0}\frac{\left( a_{1},\ldots ,a_{r};q\right) _{k}}{
\left( b_{1},\ldots ,b_{s};q\right) _{k}}\left( (-1)^{k}q^{\binom{k}{2}
}\right) ^{1+s-r}\frac{z^{k}}{\left( q;q\right) _{k}}.
\end{equation*}

Following \cite{arvesu2013first}, the $q$-falling factorial is defined by 
\begin{equation*}
\left[ s\right] _{q}^{(n)}=\frac{(q^{-s};q)_{n}}{(q-1)^{n}}q^{ns-\binom{n}{2}
},\quad n\geq 1.
\end{equation*}
We observe that $[s]_q^{(1)}$ coincides with the $q$-number $[s]_q$.
Departing from the repvious definition, one can state the $q$-analog of the
binomial coefficients (see~\cite{KLS2010}). Namely, the $q$-binomial
coefficient is defined by 
\begin{equation*}
\begin{bmatrix}
n \\ 
k%
\end{bmatrix}%
_{q}=\frac{\left( q;q\right) _{n}}{\left( q;q\right) _{k}\left( q;q\right)
_{n-k}}=\frac{\left[ n\right] _{q}!}{\left[ k\right] _{q}!\left[ n-k\right]
_{q}!},\quad n=0,1,\ldots ,n.
\end{equation*}%
where $n$ denotes a nonnegative integer.

Following~\cite{ernst2007q}, the Jackson-Hahn-Cigler $q$-subtraction is
defined by%
\begin{equation*}
\left( x\boxminus _{q}y\right) ^{n}=\sum_{0\leq k\leq n}%
\begin{bmatrix}
n \\ 
k%
\end{bmatrix}%
_{q}q^{\binom{k}{2}}(-y)^{k}x^{n-k}.
\end{equation*}
It will be useful for determining a more compact writing for the derivatives
of the reproducing kernel of the sequence of $q$-Hermite I polynomials, and
all the elements determined from it.

At this point, we define the $q$-derivative, or the Euler--Jackson $q$%
-difference operator, by 
\begin{equation}  \label{eulerjackson}
({\mathscr D}_{q}f)(z)=%
\begin{cases}
\displaystyle\frac{f(qz)-f(z)}{(q-1)z}, & \text{if}\ z\neq 0,\ q\neq 1, \\ 
&  \\ 
f^{\prime }(z), & \text{if}\ z=0,\ q=1,%
\end{cases}%
\end{equation}%
where ${\mathscr D}_{q}^{0}f=f$, ${\mathscr D}_{q}^{n}f={\mathscr D}_{q}({%
\mathscr D}_{q}^{n-1}f)$, with $n\geq 1$. We observe that 
\begin{equation*}
\lim\limits_{q\rightarrow 1}{\mathscr D}_{q}f(z)=f^{\prime }(z).
\end{equation*}

The previous $q$-analog of the derivative operator will determine two
functional equations satisfied by the Sobolev type polynomials defined in
the present work. Moreover, the $q$-derivative satisfies the following
algebraic statements: 
\begin{equation}
{\mathscr D}_{q}[f(\gamma x)]=\gamma({\mathscr D}_{q}f)(\gamma x),\quad
\forall\,\,\gamma\in\mathbb{C}.  \label{cadrule}
\end{equation}
\begin{equation}
{\mathscr D}_{q}f(z)={\mathscr D}_{q^{-1}}f(qz).  \label{DqProp}
\end{equation}
\begin{equation}
{\mathscr D}_q[f(z)g(z)]=f(qz){\mathscr D}_qg(z)+g(z){\mathscr D}_qf(z) .
\label{prodqD}
\end{equation}
\begin{equation}
{\mathscr D}_{q}({\mathscr D}_{q^{-1}}f)(z)=q^{-1}{\mathscr D}_{q^{-1}}({%
\mathscr D}_{q}f)(z).  \label{DqProOK}
\end{equation}



\subsection{$q$-Hermite I orthogonal polynomials}

\label{S11-Prelim}



After the above section recalling the main elements of $q$-calculus, we
continue by giving several aspects and properties of the $q$-Hermite I
polynomials $\{H_{n}(x;q)\}_{n\geq 0}$ .

Departing from the generating function (\ref{e1}), one has that the monic $q$%
-Hermite I polynomials can be explicitly given by%
\begin{equation}
H_{n}(x;q)=q^{\binom{n}{2}}\,_{2}\phi _{1}\left( 
\begin{array}{c}
q^{-n},x^{-1} \\ 
0%
\end{array}%
;q,-qx\right) ,  \label{ASP}
\end{equation}%
satisfying the orthogonality relation%
\begin{equation*}
\int_{-1}^{1}H_{m}(x;q)H_{n}(x;q)(qx,-qx;q)_{\infty
}d_{q}x=(1-q)(q;q)_{n}(q,-1,-q;q)_{\infty }q^{\binom{n}{2}}\delta _{m,n},
\end{equation*}
where $\delta_{m,n}$ stands for Kronecker delta. In the previous definition,
the $q$-integral is defined for every real function $f$ defined in $[-1,1]$
by 
\begin{equation*}
\int_{-1}^1f(t)d_qt=(1-q)\left(\sum_{n\ge0}f(q^{n})q^n+\sum_{n%
\ge0}f(-q^{n})q^n\right).
\end{equation*}
We observe the evaluations of $f$ make sense due to $0<q<1$, and we refer to
(1.15.7) in~\cite{KLS2010} for an expression derived from that formula.


The following statements can be derived from the previous definitions (see~%
\cite{KLS2010} for further details).

\begin{proposition}
\label{S1-Proposition11} Let $\{H_{n}(x;q)\}_{n\geq 0}$ be the sequence of $%
q $-Hermite I polynomials of degree $n$. Then following statements hold.

\begin{enumerate}
\item Recurrence relation. The recurrence relation holds for every integer $%
n\ge0$ 
\begin{equation}
xH_{n}(x;q) =H_{n+1}(x;q)+\gamma_n H_{n-1}(x;q),  \label{ReR}
\end{equation}
with initial conditions $H_{-1}(x;q)=0$ and $H_{0}(x;q)=1$. Here, $%
\gamma_n=q^{n-1}(1-q^{n})$.

\item Squared norm. For every $n\in \mathbb{N}$, 
\begin{equation*}
||H_{n}||^{2}=(1-q)(q;q)_n(q,-1,-q;q)_{\infty}q^{\binom{n}{2}}.
\end{equation*}

\item Forward shift operator. 
\begin{equation}
{\mathscr D}_{q}^{k}H_{n}(x;q) =[n]_{q}^{(k)}H_{n-k}(x;q) ,  \label{FSop}
\end{equation}
where we recall that 
\begin{equation*}
\left[ n\right] _{q}^{(k)}=\frac{(q^{-n};q)_{k}}{(q-1)^{k}}q^{kn-\binom{k}{2}
},
\end{equation*}
denotes the $q$-falling factorial.

\item Second-order $q$-difference equation. 
\begin{equation*}
\sigma(x){\mathscr D}_q{\mathscr D}_{q^{-1}}H_{n}(x;q)+\tau(x){\mathscr D}%
_{q}H_{n}(x;q)+\lambda_{n,q}H_{n}(x;q)=0,
\end{equation*}
where $\sigma(x)=x^2-1$, $\tau(x)=(1-q)^{-1}x$ and $%
\lambda_{n,q}=[n]_q([1-n]_q\sigma^{\prime \prime }/2-\tau^{\prime })$.
\end{enumerate}
\end{proposition}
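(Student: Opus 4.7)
The plan is to dispatch all four items by leaning on the generating function (\ref{e1}) and the hypergeometric representation (\ref{ASP}), since these two tools encode essentially everything one needs for the $q$-Hermite I family. I would first establish (1) and (2) as a single package, then derive (3), and finally obtain (4) from the Hahn-class structure.

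For the recurrence, I would start from $G(x,t)=\sum_{n\ge 0} H_n(x;q)t^n/(q;q)_n$ and exploit the product identity $(1-2xt+t^2)G(x,t)=G(x,tq)$, which is immediate from (\ref{e1}) by peeling off the $n=0$ factor and reindexing. Expanding both sides as power series in $t$ and comparing coefficients of $t^{n+1}$, together with $(q;q)_{n+1}=(1-q^{n+1})(q;q)_n$, delivers (\ref{ReR}) with $\gamma_n=q^{n-1}(1-q^n)$ after reducing to the monic normalization. The initial conditions are read off at $n=0$. For the squared norm I would proceed by induction: since the weight $(qx,-qx;q)_\infty$ is even on $[-1,1]$ (no middle term appears in the recurrence), the standard identity $\|H_n\|^2=\gamma_n\|H_{n-1}\|^2$ telescopes to $\|H_n\|^2=\|H_0\|^2\prod_{k=1}^{n} q^{k-1}(1-q^k)=\|H_0\|^2 q^{\binom{n}{2}}(q;q)_n$, and the base case $\|H_0\|^2=(1-q)(q,-1,-q;q)_\infty$ follows from evaluating the Jackson integral $\int_{-1}^1 (qx,-qx;q)_\infty\, d_qx$ via the triple product identity.

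For the forward shift operator, the cleanest route is to act with ${\mathscr D}_q$ on $G(x,t)$: a direct computation from the definition (\ref{eulerjackson}) gives ${\mathscr D}_q G(x,t)=\bigl(2t/(1-q)\bigr)G(x,tq)$ after cancellation of the factor $1-2xt+t^2$. Expanding both sides in powers of $t$, the coefficient of $t^n$ identity produces ${\mathscr D}_q H_n(x;q)=[n]_q H_{n-1}(x;q)$. Iterating $k$ times and collapsing the product $[n]_q[n-1]_q\cdots[n-k+1]_q$ into the $q$-falling factorial $[n]_q^{(k)}$ yields (\ref{FSop}). Alternatively one may differentiate (\ref{ASP}) term-by-term using the standard action of ${\mathscr D}_q$ on $_2\phi_1$ series; both routes are short.

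For the second-order $q$-difference equation I would invoke the Hahn-class mechanism. The weight satisfies a Pearson-type equation ${\mathscr D}_{q^{-1}}[\sigma(x) w(x)]=\tau(x) w(x)$ with $\sigma(x)=x^2-1$ and $\tau(x)=x/(1-q)$; combining this with integration by parts for the Jackson integral shows that $L:=\sigma\, {\mathscr D}_q{\mathscr D}_{q^{-1}}+\tau\,{\mathscr D}_q$ is symmetric with respect to $\langle\cdot,\cdot\rangle$. Since $\deg\sigma\le 2$ and $\deg\tau\le 1$, $L$ preserves polynomial degree, so $L H_n$ is a polynomial of degree at most $n$; symmetry together with orthogonality forces $L H_n=-\lambda_{n,q} H_n$ for some scalar $\lambda_{n,q}$, and matching the leading coefficient of $x^n$ on both sides (where ${\mathscr D}_q{\mathscr D}_{q^{-1}} x^n=[n]_q[n-1]_q\,x^{n-2}$ and ${\mathscr D}_q x^n=[n]_q x^{n-1}$) pins down $\lambda_{n,q}=[n]_q([1-n]_q\sigma''/2-\tau')$. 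The main obstacle is the last step: verifying the Pearson equation carefully in the $q$-discrete sense (so that the self-adjointness argument really works on the Jackson integral) and then doing the leading-coefficient bookkeeping in a way that produces $\lambda_{n,q}$ in exactly the stated compact form. The first three items, by contrast, reduce to routine manipulations once the right generating-function identity is written down.
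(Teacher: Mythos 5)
The paper offers no proof of this proposition at all---it is quoted from \cite{KLS2010} (Section 14.28, the discrete $q$-Hermite I family) with the remark that the statements ``can be derived from the previous definitions''---so your attempt must stand on its own, and there it has a genuine flaw. Your derivations of items (1) and (3) hinge on the generating function (\ref{e1}), but (\ref{e1}) generates the \emph{continuous} $q$-Hermite polynomials (factor $1-2xtq^{n}+t^{2}q^{2n}=(1-te^{i\theta}q^{n})(1-te^{-i\theta}q^{n})$ with $x=\cos\theta$), not the family the proposition concerns, namely the discrete $q$-Hermite I polynomials fixed by (\ref{ASP}) and by orthogonality with respect to $(qx,-qx;q)_{\infty}d_{q}x$. (The paper itself is sloppy on this point, but the stated coefficients belong unambiguously to the discrete family.) Concretely: your functional equation $(1-2xt+t^{2})G(x,t)=G(x,tq)$ is correct for (\ref{e1}), but comparing coefficients yields $2xH_{n}=H_{n+1}+(1-q^{n})H_{n-1}$, whose monic form has $\gamma_{n}=(1-q^{n})/4$, not the stated $\gamma_{n}=q^{n-1}(1-q^{n})$. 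Worse, your claimed identity ${\mathscr D}_{q}G(x,t)=\bigl(2t/(1-q)\bigr)G(x,tq)$ is false: the Euler--Jackson operator is not a lowering operator for the continuous family at all; e.g.\ from $H_{3}=8x^{3}-2(1-q)(2+q)x$ one gets ${\mathscr D}_{q}H_{3}=8[3]_{q}x^{2}-2(1-q)(2+q)$, which is proportional to $H_{2}=4x^{2}-(1-q)$ only if $q^{2}=1$. Both items repair cleanly once you switch to the discrete $q$-Hermite I generating function $G(x,t)=(t^{2};q^{2})_{\infty}/(xt;q)_{\infty}=\sum_{n\geq0}H_{n}(x;q)\,t^{n}/(q;q)_{n}$: the relation $(1-t^{2})G(x,qt)=(1-xt)G(x,t)$ gives exactly $xH_{n}=H_{n+1}+q^{n-1}(1-q^{n})H_{n-1}$, and $G(qx,t)=(1-xt)G(x,t)$ gives ${\mathscr D}_{q,x}G=\tfrac{t}{1-q}G$, hence ${\mathscr D}_{q}H_{n}=[n]_{q}H_{n-1}$; after that your iteration and the collapse of $[n]_{q}[n-1]_{q}\cdots[n-k+1]_{q}$ into $[n]_{q}^{(k)}$ is fine. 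Your telescoping $\|H_{n}\|^{2}=\gamma_{n}\|H_{n-1}\|^{2}$ for item (2) is also fine---but note it silently uses the correct $\gamma_{n}$, contradicting the recurrence your own item (1) would have produced.

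For item (4), the Pearson-plus-symmetry strategy is sound (the boundary terms in $q$-integration by parts vanish precisely because $\sigma(\pm1)=0$), but your leading-coefficient bookkeeping contains an error that would ruin the eigenvalue: ${\mathscr D}_{q^{-1}}x^{n}=[n]_{q^{-1}}x^{n-1}=q^{1-n}[n]_{q}x^{n-1}$, so ${\mathscr D}_{q}{\mathscr D}_{q^{-1}}x^{n}=q^{1-n}[n]_{q}[n-1]_{q}x^{n-2}$, not $[n]_{q}[n-1]_{q}x^{n-2}$ as you wrote. With the missing factor restored, matching the coefficient of $x^{n}$ gives
\begin{equation*}
\lambda_{n,q}=-\Bigl(q^{1-n}[n]_{q}[n-1]_{q}+\frac{[n]_{q}}{1-q}\Bigr)=-\frac{q^{1-n}[n]_{q}}{1-q},
\end{equation*}
which is indeed the stated $[n]_{q}\bigl([1-n]_{q}\sigma''/2-\tau'\bigr)$; your version would instead give $-[n]_{q}(2-q^{n-1})/(1-q)$. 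In summary: items (1) and (3) as written prove statements about a different polynomial family and must be redone from the correct generating function (or from (\ref{ASP}), or from the Al-Salam--Carlitz I specialization $a=-1$); item (4) has a real but fixable computational slip; only item (2) goes through as proposed.
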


It is worth mentioning that the previous $q$-difference equation appears in 
\cite{KLS2010} in an equivalent form in equation (14.28.5).



\begin{proposition}[Christoffel-Darboux formula]
\label{S1-Proposition12} Let $\{H_{n}(x;q)\}_{n\geq 0}$ be the sequence of $%
q $-Hermite I polynomials. If we denote the $n$-th reproducing kernel by 
\begin{equation*}
K_{n,q}(x,y)=\sum_{k=0}^{n}\frac{H_{k}(x;q)H_{k}(y;q)}{||H_{k}||^{2}}.
\end{equation*}
Then, for all $n\in \mathbb{N}$, it holds that 
\begin{equation}
K_{n,q}(x,y)=\frac{H_{n+1}(x;q) H_{n}(y;q) -H_{n+1}(y;q)H_{n}(x;q) }{\left(
x-y\right) ||H_{n}||^{2}}.  \label{CDarb}
\end{equation}
\end{proposition}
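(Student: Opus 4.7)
The proof plan follows the classical Christoffel–Darboux telescoping argument, specialized to the monic recurrence in Proposition~\ref{S1-Proposition11}(1). The key algebraic fact needed is that the squared norms and the recurrence coefficients are tied by
\begin{equation*}
\|H_k\|^2 = \gamma_k\, \|H_{k-1}\|^2,\qquad k\ge 1,
\end{equation*}
which I would verify first as a short computation: from the explicit form of $\|H_n\|^2$ in Proposition~\ref{S1-Proposition11}(2), the ratio $\|H_k\|^2/\|H_{k-1}\|^2$ equals $(1-q^k)q^{k-1}$, precisely $\gamma_k$. This is a standard identity for monic orthogonal polynomials, and it is what makes the telescoping work.

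Next I would write the recurrence (\ref{ReR}) at the points $x$ and $y$, multiply the first by $H_k(y;q)$, the second by $H_k(x;q)$, and subtract, obtaining
\begin{equation*}
(x-y)H_k(x;q)H_k(y;q) = \bigl[H_{k+1}(x;q)H_k(y;q)-H_{k+1}(y;q)H_k(x;q)\bigr] - \gamma_k\bigl[H_k(x;q)H_{k-1}(y;q)-H_k(y;q)H_{k-1}(x;q)\bigr].
\end{equation*}
Dividing by $\|H_k\|^2$ and introducing the auxiliary quantity
\begin{equation*}
T_k(x,y) := \frac{H_{k+1}(x;q)H_k(y;q)-H_{k+1}(y;q)H_k(x;q)}{\|H_k\|^2},
\end{equation*}
the right-hand side collapses, via the norm identity above, to $T_k(x,y)-T_{k-1}(x,y)$. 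Hence each summand of the kernel satisfies a telescoping relation.

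Finally, summing $k=0,1,\dots,n$ yields
\begin{equation*}
(x-y)K_{n,q}(x,y) = T_n(x,y) - T_{-1}(x,y),
\end{equation*}
and the boundary term vanishes because $H_{-1}(x;q)=0$ by the initial condition in Proposition~\ref{S1-Proposition11}(1). Dividing through by $x-y$ produces exactly formula (\ref{CDarb}). I expect no real obstacle here; the only step that requires any care is the verification of the norm–coefficient identity $\|H_k\|^2=\gamma_k\|H_{k-1}\|^2$, since without it the telescoping fails. Everything else is a direct manipulation of the three-term recurrence.
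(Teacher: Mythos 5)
The paper itself offers no proof of this proposition: it is stated as a classical fact (with \cite{KLS2010} as the standing reference), so your telescoping argument \emph{is} the standard proof being tacitly invoked, and it is correct. The key norm identity checks out against Proposition~\ref{S1-Proposition11}(2), since $(q;q)_k/(q;q)_{k-1}=1-q^{k}$ and $q^{\binom{k}{2}-\binom{k-1}{2}}=q^{k-1}$ give exactly $\|H_k\|^2/\|H_{k-1}\|^2=q^{k-1}(1-q^{k})=\gamma_k$, and the cross-multiplied recurrence and summation are handled properly. One pedantic point: define $T_{-1}:=0$ directly rather than through your displayed formula, whose denominator $\|H_{-1}\|^2$ is undefined; the $k=0$ step of the telescoping is harmless in any case, both because $H_{-1}(x;q)\equiv 0$ and because $\gamma_0=q^{-1}(1-q^{0})=0$, so the offending term never actually arises.
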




Let us fix the following notation for the partial $q$-derivatives of $%
K_{n,q}(x,y)$: 
\begin{eqnarray*}
K_{n,q}^{(i,j)}(x,y) &=&{\mathscr D}_{q,y}^{j}({\mathscr D}%
_{q,x}^{i}K_{n,q}(x,y)) \\
&=&\sum_{k=0}^{n}\frac{{\mathscr D}_{q}^{i}H_{k}(x;q){\mathscr D}%
_{q}^{j}H_{k}(y;q)}{||H_{k}||^{2}}.
\end{eqnarray*}%
Christoffel-Darboux formula allow us to give a more precise writing for the
derivatives of the $n$-th reproducing kernel associated to the sequence of $%
q $-Hermite I polynomials in the next result. Its proof follows an analogous
technique as that of Lemma 1~\cite{hermoso2020second}. Therefore, we omit
the proof.


\begin{proposition}
\label{S1-LemmaKernel0j} Let $\{H_{n}(x;q)\}_{n\geq 0}$ be the sequence of $%
q $-Hermite I polynomials of degree $n$. Then following statements hold, for
all $n,j\in \mathbb{N}$ one has 
\begin{equation}
K_{n-1,q}^{(0,j)}(x,y)={\mathcal{A}}_{n}^{(j)}(x,y)H_{n}(x;q)+{\mathcal{B}}%
_{n}^{(j)}(x,y)H_{n-1}(x;q),  \label{Kernel0j}
\end{equation}
where 
\begin{equation*}
{\mathcal{A}}_{n}^{(j)}(x,y)=\frac{\left[ j\right] _{q}!}{||H_{n-1}||^{2}
\left( x\boxminus _{q}y\right) ^{j+1}}\sum_{k=0}^{j}\frac{{\mathscr D}%
_{q}^{k}H_{n-1}(y;q)}{\left[ k\right] _{q}!}(x\boxminus _{q}y)^{k},
\end{equation*}
and 
\begin{equation*}
{\mathcal{B}}_{n}^{(j)}(x,y)=-\frac{\left[ j\right] _{q}!}{%
||H_{n-1}||^{2}\left( x\boxminus _{q}y\right) ^{j+1}}\sum_{k=0}^{j}\frac{{%
\mathscr D}_{q}^{k}H_{n}(y;q)}{\left[ k\right] _{q}!}(x\boxminus _{q}y)^{k}.
\end{equation*}
\end{proposition}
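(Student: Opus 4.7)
The natural strategy is induction on $j$, with the Christoffel--Darboux formula (\ref{CDarb}) serving as the base case. For $j=0$, the identities $[0]_{q}!=1$, $(x\boxminus_{q}y)^{1}=x-y$ and $\mathscr{D}_{q}^{0}=\mathrm{Id}$ reduce (\ref{Kernel0j}) to
\[ K_{n-1,q}(x,y)=\frac{H_{n-1}(y;q)H_{n}(x;q)-H_{n}(y;q)H_{n-1}(x;q)}{||H_{n-1}||^{2}(x-y)}, \]
which is exactly (\ref{CDarb}).

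For the inductive step, suppose (\ref{Kernel0j}) holds for some fixed $j\geq 0$ and apply $\mathscr{D}_{q,y}$ to both sides. Since $H_{n}(x;q)$ and $H_{n-1}(x;q)$ do not depend on $y$, this yields
\[ K_{n-1,q}^{(0,j+1)}(x,y)=(\mathscr{D}_{q,y}\mathcal{A}_{n}^{(j)})(x,y)\,H_{n}(x;q)+(\mathscr{D}_{q,y}\mathcal{B}_{n}^{(j)})(x,y)\,H_{n-1}(x;q), \]
so the induction reduces to verifying the two identities
\[ \mathscr{D}_{q,y}\mathcal{A}_{n}^{(j)}(x,y)=\mathcal{A}_{n}^{(j+1)}(x,y), \qquad \mathscr{D}_{q,y}\mathcal{B}_{n}^{(j)}(x,y)=\mathcal{B}_{n}^{(j+1)}(x,y). \]

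To prove the first identity (the second is entirely symmetric), I would factor $\mathcal{A}_{n}^{(j)}(x,y)$ as the product of the rational factor $(x\boxminus_{q}y)^{-(j+1)}$ with the truncated $q$-Taylor polynomial $S_{j}(y):=\sum_{k=0}^{j}\mathscr{D}_{q}^{k}H_{n-1}(y;q)(x\boxminus_{q}y)^{k}/[k]_{q}!$, and apply the $q$-product rule (\ref{prodqD}) in the variable $y$. The resulting two contributions must then be combined using an explicit formula for $\mathscr{D}_{q,y}(x\boxminus_{q}y)^{m}$; this supplies the factor $[j+1]_{q}$ that upgrades $[j]_{q}!$ to $[j+1]_{q}!$ and shifts the exponent in the denominator from $-(j+1)$ to $-(j+2)$.

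The main obstacle is the algebraic bookkeeping in this last step: the $q$-product rule introduces arguments evaluated at $qy$ rather than $y$, and one has to reconcile the shifted and unshifted occurrences via the defining relation $f(qy)-f(y)=(q-1)y\,\mathscr{D}_{q}f(y)$. After cancellations, the summation range gets extended from $\sum_{k=0}^{j}$ to $\sum_{k=0}^{j+1}$, the missing summand at $k=j+1$ being generated by $\mathscr{D}_{q,y}$ acting on the top-order term of $S_{j}$, which closes the induction.
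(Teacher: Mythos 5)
Your strategy is correct, and it is a genuinely different (though closely related) route from the paper's: the paper omits the proof and defers to Lemma~1 of \cite{hermoso2020second}, where the identity is obtained in one shot by applying $\mathscr{D}_{q,y}^{j}$ to the Christoffel--Darboux formula (\ref{CDarb}) (with $n$ replaced by $n-1$) and invoking the $q$-Leibniz rule together with the closed form $\mathscr{D}_{q,y}^{m}\bigl[(x-y)^{-1}\bigr]=[m]_{q}!\,(x\boxminus_{q}y)^{-(m+1)}$, where $(x\boxminus_{q}y)^{m+1}=\prod_{i=0}^{m}(x-q^{i}y)$; this immediately produces ${\mathcal{A}}_{n}^{(j)}$ and ${\mathcal{B}}_{n}^{(j)}$. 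Your induction on $j$ re-derives precisely the instance of the $q$-Leibniz rule that is needed, so the mathematical content is the same, and your reduction to the identities $\mathscr{D}_{q,y}{\mathcal{A}}_{n}^{(j)}={\mathcal{A}}_{n}^{(j+1)}$ and $\mathscr{D}_{q,y}{\mathcal{B}}_{n}^{(j)}={\mathcal{B}}_{n}^{(j+1)}$ is valid (the $x$-dependent factors are constants for $\mathscr{D}_{q,y}$, and your base case matches (\ref{CDarb}) exactly). The mechanism you describe does close: with $S_{j}$ as in your notation, the step boils down to $[j+1]_{q}\,[S_{j+1}(y)-S_{j}(qy)]=(x-q^{j+1}y)\,\mathscr{D}_{q}S_{j}(y)$, which telescopes via $f(qy)=f(y)+(q-1)y\,\mathscr{D}_{q}f(y)$. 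Two remarks on the bookkeeping you left implicit: first, since Jackson--Hahn--Cigler powers do not obey the law of exponents, $(x\boxminus_{q}y)^{-(j+1)}$ must be read as $1/\prod_{i=0}^{j}(x-q^{i}y)$, and its $q$-derivative in $y$ is $[j+1]_{q}(x\boxminus_{q}y)^{-(j+2)}$ --- positive sign and \emph{no} argument shift, in contrast to the positive-power formula $\mathscr{D}_{q,y}(x\boxminus_{q}y)^{m}=-[m]_{q}(x\boxminus_{q}(qy))^{m-1}$, and conflating the two is the one place your sketch could go wrong in execution; second, you can short-circuit the computation entirely, since the base case identifies ${\mathcal{A}}_{n}^{(0)}(x,y)=H_{n-1}(y;q)/\bigl(||H_{n-1}||^{2}(x-y)\bigr)$ and the displayed closed form plus the $q$-Leibniz rule shows ${\mathcal{A}}_{n}^{(j)}=\mathscr{D}_{q,y}^{j}{\mathcal{A}}_{n}^{(0)}$, making the inductive identity automatic. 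What your induction buys is self-containedness (no appeal to a general $q$-Leibniz formula); what the paper's referenced computation buys is brevity.
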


\begin{proposition}
\label{S1-LemmaKerneli2} Let $\{H_{n}(x;q)\}_{n\geq 0}$ be the sequence of $%
q $-Hermite I polynomials of degree $n$. Then following statements hold, for
all $n,j\in \mathbb{N}$, 
\begin{eqnarray}
K_{n-1,q}^{(1,j)}(x,y) &=&{\mathcal{C}}_{1,n}(x,y)H_{n}(x;q)+{\mathcal{D}}%
_{1,n}(x,y)H_{n-1}(x;q),  \label{kernel1j} \\
K_{n-1,q}^{(2,j)}(x,y) &=&{\mathcal{C}}_{2,n}(x,y)H_{n}(x;q)+{\mathcal{D}}%
_{2,n}(x,y)H_{n-1}(x;q),  \label{kernel2j}
\end{eqnarray}%
where%
\begin{equation*}
{\mathcal{C}}_{1,n}(x,y)={\mathscr D}_{q}{\mathcal{A}}%
_{n}^{(j)}(x,y)-[n-1]_{q}\gamma _{n-1}^{-1}{\mathcal{B}}_{n}^{(j)}(qx,y),
\end{equation*}%
\begin{equation*}
{\mathcal{D}}_{1,n}(x,y)=[n]_{q}{\mathcal{A}}_{n}^{(j)}(qx,y)+[n-1]_{q}%
\gamma _{n-1}^{-1}x{\mathcal{B}}_{n}^{(j)}(qx,y)+{\mathscr D}_{q}{\mathcal{B}%
}_{n}^{(j)}(x,y),
\end{equation*}%
\begin{equation*}
{\mathcal{C}}_{2,n}(x,y)={\mathscr D}_{q}{\mathcal{C}}_{1,n}(x,y)-[n-1]_{q}%
\gamma _{n-1}^{-1}{\mathcal{D}}_{1,n}(qx,y),
\end{equation*}%
and 
\begin{equation*}
{\mathcal{D}}_{2,n}(x,y)=[n]_{q}{\mathcal{C}}_{1,n}(qx,y)+[n-1]_{q}\gamma
_{n-1}^{-1}x{\mathcal{D}}_{1,n}(qx,y)+{\mathscr D}_{q}{\mathcal{D}}%
_{1,n}(x,y).
\end{equation*}
\end{proposition}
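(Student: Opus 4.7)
The plan is to obtain both identities by iteratively applying the Euler--Jackson operator ${\mathscr D}_{q}$ in the $x$-variable to formula (\ref{Kernel0j}), using the $q$-Leibniz rule (\ref{prodqD}), the forward shift relation (\ref{FSop}), and the three-term recurrence (\ref{ReR}) to keep the output expressed only in terms of $H_{n}(x;q)$ and $H_{n-1}(x;q)$.

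First I would differentiate (\ref{Kernel0j}) with respect to $x$. By (\ref{prodqD}),
\begin{equation*}
{\mathscr D}_{q,x}\bigl[{\mathcal{A}}_{n}^{(j)}(x,y)H_{n}(x;q)\bigr]
={\mathcal{A}}_{n}^{(j)}(qx,y)\,{\mathscr D}_{q}H_{n}(x;q)
+H_{n}(x;q)\,{\mathscr D}_{q}{\mathcal{A}}_{n}^{(j)}(x,y),
\end{equation*}
and analogously for the term involving ${\mathcal{B}}_{n}^{(j)}$. Next I would invoke (\ref{FSop}) with $k=1$ to replace ${\mathscr D}_{q}H_{n}(x;q)$ by $[n]_{q}H_{n-1}(x;q)$ and ${\mathscr D}_{q}H_{n-1}(x;q)$ by $[n-1]_{q}H_{n-2}(x;q)$. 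Since $H_{n-2}$ is not allowed in the final expression, I would eliminate it through the monic recurrence (\ref{ReR}), written as
\begin{equation*}
H_{n-2}(x;q)=\gamma_{n-1}^{-1}\bigl(xH_{n-1}(x;q)-H_{n}(x;q)\bigr).
\end{equation*}
Collecting the coefficients of $H_{n}(x;q)$ and $H_{n-1}(x;q)$ then reproduces exactly the expressions stated for ${\mathcal{C}}_{1,n}(x,y)$ and ${\mathcal{D}}_{1,n}(x,y)$, proving (\ref{kernel1j}).

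For (\ref{kernel2j}) I would apply ${\mathscr D}_{q,x}$ a second time to (\ref{kernel1j}), repeating verbatim the same three ingredients: $q$-Leibniz shifts the coefficients ${\mathcal{C}}_{1,n}$ and ${\mathcal{D}}_{1,n}$ into ${\mathcal{C}}_{1,n}(qx,y)$ and ${\mathcal{D}}_{1,n}(qx,y)$ in the terms multiplying ${\mathscr D}_{q}H_{n}$ and ${\mathscr D}_{q}H_{n-1}$, the shift operator (\ref{FSop}) turns these $q$-derivatives back into $[n]_{q}H_{n-1}$ and $[n-1]_{q}H_{n-2}$, and the recurrence (\ref{ReR}) removes $H_{n-2}$ once more. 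The rearrangement gives precisely the formulas for ${\mathcal{C}}_{2,n}$ and ${\mathcal{D}}_{2,n}$ announced in the statement, and by induction the same pattern would yield any higher $K_{n-1,q}^{(i,j)}$.

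The calculation is essentially mechanical; the only delicate point is being consistent with the dilation $x\mapsto qx$ that the $q$-Leibniz rule (\ref{prodqD}) introduces in the coefficient functions (hence the occurrences of ${\mathcal{A}}_{n}^{(j)}(qx,y)$, ${\mathcal{B}}_{n}^{(j)}(qx,y)$, etc.), and ensuring that after eliminating $H_{n-2}$ no hidden dependence on $H_{n-3}$ appears. Since the recurrence (\ref{ReR}) expresses $H_{n-2}$ purely in terms of $H_{n-1}$ and $H_{n}$, the reduction terminates at each step, which is what makes the closed form of the stated type possible.
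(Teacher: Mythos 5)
Your proposal is correct and takes essentially the same route as the paper's proof: apply ${\mathscr D}_{q}$ in the $x$-variable to \eqref{Kernel0j}, use the $q$-Leibniz rule \eqref{prodqD} together with the forward shift \eqref{FSop}, eliminate $H_{n-2}(x;q)$ via the recurrence \eqref{ReR}, and iterate the same procedure once more to obtain \eqref{kernel2j}. The coefficients you collect for ${\mathcal{C}}_{1,n}$, ${\mathcal{D}}_{1,n}$, ${\mathcal{C}}_{2,n}$ and ${\mathcal{D}}_{2,n}$ match the statement exactly; you merely spell out the $H_{n-2}$-elimination step that the paper leaves implicit.
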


\begin{proof}
Applying the $q$-derivative operator ${\mathscr D}_{q}$ with respect to $x$
variable to (\ref{Kernel0j}), together with the property \eqref{prodqD}
yields%
\begin{equation*}
K_{n-1,q}^{(1,j)}(x,y)={\mathcal{A}}_{n}^{(j)}(qx){\mathscr D}%
_{q}H_{n}(x;q)+H_{n}(x;q){\mathscr D}_{q}{\mathcal{A}}_{n}^{(j)}(x)
\end{equation*}%
\begin{equation*}
+{\mathcal{B}}_{n}^{(j)}(qx){\mathscr D}_{q}H_{n-1}(x;q)+H_{n-1}(x;q){%
\mathscr D}_{q}{\mathcal{B}}_{n}^{(j)}(x).
\end{equation*}
Using \eqref{prodqD}, \eqref{ReR} and \eqref{FSop} we deduce \eqref{kernel1j}%
. We obtain \eqref{kernel2j} in an analogous way departing from (\ref%
{kernel1j}) instead of (\ref{Kernel0j}).
\end{proof}



\section{Connection formulas and hypergeometric representation}

\label{S3-ConnForm}



In this section we introduce the $q$-Hermite I-Sobolev type polynomials of
higher order $\{\mathbb{H}_{n}(x;q)\}_{n\geq 0}$, which are orthogonal with
respect to the Sobolev-type inner product%
\begin{equation}
\left\langle f,g\right\rangle _{\lambda}
=\int_{-1}^{1}f(x;q)g(x;q)(qx,-qx;q)_{\infty }d_{q}x +\lambda({\mathscr D}%
_{q}^{j}f)(\alpha;q)({\mathscr D}_{q}^{j}g)(\alpha;q),  \label{piSob}
\end{equation}
where $\alpha\in\mathbb{R}\setminus[-1,1]$, $\lambda\in\mathbb{R}^{+}$ and $%
j\in\mathbb{N}$. In addition, we express the $q$-Hermite I-Sobolev type
polynomials of higher order $\{\mathbb{H}_{n}(x;q)\}_{n\geq 0}$ in terms of
the $q$-Hermite polynomials $\{H_{n}(x;q)\}_{n\geq 0}$, the kernel
polynomials and their corresponding derivatives. Moreover, we obtain a
representation of the proposed polynomials as hypergeometric functions.

\begin{proposition}
Let $\{\mathbb{H}_{n}(x;q)\}_{n\geq 0}$ be the sequence of $q$-Hermite
I-Sobolev type polynomials of degree $n$. Then, the following statements
hold for $n\ge1$: 
\begin{equation}
\mathbb{H}_{n}(x;q)=H_{n}(x;q)-\lambda \frac{[n]_{q}^{(j)}H_{n-j}(\alpha;q)}{%
1+\lambda K_{n-1,q}^{(j,j)}(\alpha,\alpha)}K_{n-1,q}^{(0,j)}(x,\alpha).
\label{ConxF1}
\end{equation}
\end{proposition}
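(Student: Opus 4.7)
The plan is to follow the standard discrete-Sobolev template: expand the unknown polynomial in the $H_k$ basis, exploit orthogonality to read off the coefficients, repackage the resulting sum as a reproducing kernel, and finally pin down the one remaining scalar by differentiating and evaluating at $\alpha$.

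\textbf{Step 1 (Fourier expansion).} Since $\mathbb{H}_n$ is monic of degree $n$ and $\{H_k(x;q)\}_{k=0}^{n}$ is a basis of the space of polynomials of degree at most $n$, I would write
\begin{equation*}
\mathbb{H}_n(x;q)=H_n(x;q)+\sum_{k=0}^{n-1}c_{n,k}\,H_k(x;q),
\end{equation*}
with coefficients $c_{n,k}$ to be determined.

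\textbf{Step 2 (Orthogonality $\Rightarrow$ coefficients).} For each $0\le k\le n-1$, the Sobolev-type orthogonality $\langle \mathbb{H}_n,H_k\rangle_\lambda=0$ together with the definition \eqref{piSob} gives
\begin{equation*}
\langle \mathbb{H}_n,H_k\rangle+\lambda\,(\mathscr{D}_q^{j}\mathbb{H}_n)(\alpha)\,(\mathscr{D}_q^{j}H_k)(\alpha)=0.
\end{equation*}
The standard inner product part collapses, by orthogonality of the $q$-Hermite I polynomials and the formula for $\|H_k\|^2$ in Proposition~\ref{S1-Proposition11}, to $c_{n,k}\|H_k\|^2$. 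Solving for $c_{n,k}$ yields
\begin{equation*}
c_{n,k}=-\lambda\,(\mathscr{D}_q^{j}\mathbb{H}_n)(\alpha)\,\frac{(\mathscr{D}_q^{j}H_k)(\alpha)}{\|H_k\|^2}.
\end{equation*}

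\textbf{Step 3 (Recognising the kernel).} Substituting these coefficients back into the expansion and pulling the common factor out, the remaining sum is exactly the definition of $K_{n-1,q}^{(0,j)}(x,\alpha)$, so
\begin{equation*}
\mathbb{H}_n(x;q)=H_n(x;q)-\lambda\,(\mathscr{D}_q^{j}\mathbb{H}_n)(\alpha)\,K_{n-1,q}^{(0,j)}(x,\alpha).
\end{equation*}

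\textbf{Step 4 (Closing the loop).} The only remaining task is to evaluate the scalar $(\mathscr{D}_q^{j}\mathbb{H}_n)(\alpha)$. Applying $\mathscr{D}_q^{j}$ in the variable $x$ to the previous identity, evaluating at $x=\alpha$, and using the forward-shift relation \eqref{FSop} to write $(\mathscr{D}_q^{j}H_n)(\alpha)=[n]_q^{(j)}H_{n-j}(\alpha;q)$ gives
\begin{equation*}
(\mathscr{D}_q^{j}\mathbb{H}_n)(\alpha)\bigl[1+\lambda K_{n-1,q}^{(j,j)}(\alpha,\alpha)\bigr]=[n]_q^{(j)}H_{n-j}(\alpha;q),
\end{equation*}
from which the scalar is isolated and substituted back to produce \eqref{ConxF1}. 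The only mild subtlety is to ensure $1+\lambda K_{n-1,q}^{(j,j)}(\alpha,\alpha)\neq 0$, which holds because $\lambda>0$ and the diagonal of the reproducing kernel is nonnegative (as a sum of squares divided by positive norms). Everything else is essentially bookkeeping.
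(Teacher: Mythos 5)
Your proposal is correct and follows essentially the same route as the paper's own proof: the Fourier expansion in the $H_k$ basis, the use of the Sobolev orthogonality to read off $c_{n,k}=-\lambda({\mathscr D}_q^{j}\mathbb{H}_n)(\alpha)({\mathscr D}_q^{j}H_k)(\alpha)/\|H_k\|^2$, the repackaging of the sum as $K_{n-1,q}^{(0,j)}(x,\alpha)$, and the application of ${\mathscr D}_q^{j}$ together with the forward-shift relation (\ref{FSop}) at $x=\alpha$ to isolate the scalar ${\mathscr D}_q^{j}\mathbb{H}_n(\alpha;q)$. Your closing remark that $1+\lambda K_{n-1,q}^{(j,j)}(\alpha,\alpha)\geq 1>0$, because the diagonal kernel is a sum of squares divided by positive norms, is a correct small detail that the paper leaves implicit.
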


\begin{proof}
Taking into account the Fourier expansion 
\begin{equation*}
\mathbb{H}_{n}(x;q)=H_{n}(x;q)+\sum_{0\leq k\leq n-1}a_{n,k}H_{k}(x;q),
\end{equation*}
one can apply the property \eqref{piSob} and consider the orthogonality
properties for $H_{n}(x;q)$. Therefore, the coefficients in the previous
expansion are given by 
\begin{equation*}
a_{n,k}=-\frac{\lambda {\mathscr D}_{q}^{j}\mathbb{H}_{n}(\alpha;q){\mathscr %
D}_{q}^{j}H_{k}(\alpha;q)}{||H_{k}||^{2}},\quad 0\leq k\leq n-1.
\end{equation*}
Thus 
\begin{equation*}
\mathbb{H}_{n}(x;q)=H_{n}(x;q)-\lambda {\mathscr D}_{q}^{j}\mathbb{H}%
_{n}(\alpha;q)K_{n-1,q}^{(0,j)}(x,\alpha).
\end{equation*}
Applying the operator ${\mathscr D}_{q}^j$ to the previous equation we
obtain from (\ref{FSop}) that 
\begin{equation*}
{\mathscr D}_{q}^j\mathbb{H}_{n}(x;q)=[n]_{q}^{(j)}H_{n-j}(x;q)-\lambda {%
\mathscr D}_{q}^{j}\mathbb{H}_{n}(\alpha;q)K_{n-1,q}^{(j,j)}(x,\alpha).
\end{equation*}
which entails after evaluation at $x=\alpha$ that 
\begin{equation*}
{\mathscr D}_{q}^{j}\mathbb{H}_{n}(\alpha;q)=\frac{[n]_{q}^{(j)}H_{n-j}(%
\alpha;q)}{1+\lambda K_{n-1,q}^{(j,j)}(\alpha,\alpha)}.
\end{equation*}
Therefore, we obtain \eqref{ConxF1}.
\end{proof}

As a consequence, we have the following results.

\begin{corollary}
\label{DqDq2HS} Let $\{\mathbb{H}_{n}(x;q)\}_{n\geq 0}$ be the sequence of $%
q $-Hermite I-Sobolev type polynomials of degree $n$. Then, the following
statements hold, 
\begin{equation*}
{\mathscr D}_{q}\mathbb{H}_{n}(x;q)=[n]_{q}H_{n-1}(x;q)-\lambda \frac{%
[n]_{q}^{(j)}H_{n-j}(\alpha;q)}{1+\lambda K_{n-1,q}^{(j,j)}(\alpha,\alpha)}%
K_{n-1,q}^{(1,j)}(x,\alpha).  \label{ConxF3}
\end{equation*}
and 
\begin{equation*}
{\mathscr D}_{q}^2\mathbb{H}_{n}(x;q)=[n]_{q}^{(2)}H_{n-2}(x;q)-\lambda 
\frac{[n]_{q}^{(j)}H_{n-j}(\alpha;q)}{1+\lambda
K_{n-1,q}^{(j,j)}(\alpha,\alpha)}K_{n-1,q}^{(2,j)}(x,\alpha).  \label{ConxF2}
\end{equation*}
\end{corollary}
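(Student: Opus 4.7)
The plan is to differentiate the connection formula (\ref{ConxF1}) once and twice with respect to $x$, using the $q$-derivative operator ${\mathscr D}_{q}$, and to identify each piece with the objects already defined.

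First, I observe that the coefficient
\begin{equation*}
c_{n}:=\lambda \,\frac{[n]_{q}^{(j)}H_{n-j}(\alpha;q)}{1+\lambda K_{n-1,q}^{(j,j)}(\alpha,\alpha)}
\end{equation*}
appearing in (\ref{ConxF1}) does \emph{not} depend on the variable $x$, so it passes through ${\mathscr D}_{q}$ as a scalar multiplier. Therefore, applying ${\mathscr D}_{q}$ in the $x$ variable to both sides of (\ref{ConxF1}) gives
\begin{equation*}
{\mathscr D}_{q}\mathbb{H}_{n}(x;q)={\mathscr D}_{q}H_{n}(x;q)-c_{n}\,{\mathscr D}_{q,x}K_{n-1,q}^{(0,j)}(x,\alpha).
\end{equation*}
Now I use the forward shift operator (\ref{FSop}) with $k=1$ to replace ${\mathscr D}_{q}H_{n}(x;q)$ by $[n]_{q}H_{n-1}(x;q)$, and I note that by the very definition of the partial $q$-derivatives of the reproducing kernel,
\begin{equation*}
{\mathscr D}_{q,x}K_{n-1,q}^{(0,j)}(x,\alpha)=K_{n-1,q}^{(1,j)}(x,\alpha).
\end{equation*}
This yields the first claimed identity.

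For the second formula I apply ${\mathscr D}_{q}$ once more to the equation just obtained (equivalently, ${\mathscr D}_{q}^{2}$ directly to (\ref{ConxF1})). Again the scalar $c_{n}$ passes through, and this time (\ref{FSop}) with $k=2$ supplies ${\mathscr D}_{q}^{2}H_{n}(x;q)=[n]_{q}^{(2)}H_{n-2}(x;q)$, while iterating the definition of $K_{n-1,q}^{(i,j)}$ gives ${\mathscr D}_{q,x}^{2}K_{n-1,q}^{(0,j)}(x,\alpha)=K_{n-1,q}^{(2,j)}(x,\alpha)$. Assembling these identifications produces the second formula in the statement.

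There is essentially no obstacle here: the argument reduces to differentiating a single-variable identity in which the $\lambda$-dependent coefficient is a constant in $x$, and to invoking (\ref{FSop}) together with the definition of $K_{n-1,q}^{(i,j)}$. The only point that deserves a brief check is that ${\mathscr D}_{q}$ commutes with the finite sum defining $K_{n-1,q}^{(0,j)}(x,\alpha)$, but this is immediate from the linearity of ${\mathscr D}_{q}$ in (\ref{eulerjackson}). No product rule (\ref{prodqD}) or chain-type rule (\ref{cadrule}) is needed, which is what makes this corollary a direct consequence of Proposition~\ref{S1-LemmaKerneli2} rather than an independent computation.
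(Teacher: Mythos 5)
Your argument is correct and is exactly the derivation the paper intends: the corollary is stated there without proof as an immediate consequence of the connection formula (\ref{ConxF1}), obtained just as you do by applying ${\mathscr D}_{q}$ once and twice in the $x$ variable, with the $\lambda$-dependent coefficient passing through as a constant, using (\ref{FSop}) and the sum definition of $K_{n-1,q}^{(i,j)}$. One small slip in your closing remark: the corollary is a direct consequence of the proposition establishing (\ref{ConxF1}), not of Proposition~\ref{S1-LemmaKerneli2} --- the latter (which rewrites $K_{n-1,q}^{(1,j)}$ and $K_{n-1,q}^{(2,j)}$ in terms of $H_{n}$ and $H_{n-1}$) plays no role here and is only combined with this corollary later, in Lemmas~\ref{DqHS} and \ref{Dq2HS}.
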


\begin{lemma}
Let $\{\mathbb{H}_{n}(x;q)\}_{n\geq 0}$ be the sequence of $q$-Hermite
I-Sobolev type polynomials of degree $n$. Then, the following statements
hold, 
\begin{equation}
\mathbb{H}_{n}(x;q)={\mathcal{E}}_{1,n}(x)H_{n}(x;q)+{\mathcal{F}}%
_{1,n}(x)H_{n-1}(x;q),  \label{ConexF_I}
\end{equation}
where 
\begin{equation*}
{\mathcal{E}}_{1,n}(x)=1-\lambda \frac{[n]_{q}^{(j)}H_{n-j}(\alpha;q)}{%
1+\lambda K_{n-1,q}^{(j,j)}(\alpha,\alpha)}{\mathcal{A}}_n^{(j)}(x,\alpha),
\end{equation*}
and 
\begin{equation*}
{\mathcal{F}}_{1,n}(x)=-\lambda \frac{[n]_{q}^{(j)}H_{n-j}(\alpha;q)}{%
1+\lambda K_{n-1,q}^{(j,j)}(\alpha,\alpha)}{\mathcal{B}}_n^{(j)}(x,\alpha).
\end{equation*}
\end{lemma}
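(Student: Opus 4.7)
The plan is to obtain the claimed expression by a direct substitution, using the two ingredients already established: the connection formula \eqref{ConxF1} obtained in the previous proposition, and the closed form for $K_{n-1,q}^{(0,j)}(x,y)$ provided by Proposition~\ref{S1-LemmaKernel0j}.

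First, I would start from \eqref{ConxF1}, which reads
$$\mathbb{H}_{n}(x;q)=H_{n}(x;q)-\lambda \frac{[n]_{q}^{(j)}H_{n-j}(\alpha;q)}{1+\lambda K_{n-1,q}^{(j,j)}(\alpha,\alpha)}\,K_{n-1,q}^{(0,j)}(x,\alpha),$$
and specialize the identity \eqref{Kernel0j} of Proposition~\ref{S1-LemmaKernel0j} at $y=\alpha$, giving
$$K_{n-1,q}^{(0,j)}(x,\alpha)={\mathcal{A}}_{n}^{(j)}(x,\alpha)H_{n}(x;q)+{\mathcal{B}}_{n}^{(j)}(x,\alpha)H_{n-1}(x;q).$$

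Next, I would substitute this expression into \eqref{ConxF1} and collect the coefficients of $H_{n}(x;q)$ and $H_{n-1}(x;q)$. The coefficient of $H_{n}(x;q)$ is precisely $1$ minus $\lambda\,[n]_{q}^{(j)}H_{n-j}(\alpha;q)/(1+\lambda K_{n-1,q}^{(j,j)}(\alpha,\alpha))$ times ${\mathcal{A}}_{n}^{(j)}(x,\alpha)$, which matches the definition of ${\mathcal{E}}_{1,n}(x)$, while the coefficient of $H_{n-1}(x;q)$ is the negative of that same factor times ${\mathcal{B}}_{n}^{(j)}(x,\alpha)$, matching ${\mathcal{F}}_{1,n}(x)$. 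This yields \eqref{ConexF_I}.

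There is essentially no obstacle in this argument; the only thing to be careful about is that the denominator $1+\lambda K_{n-1,q}^{(j,j)}(\alpha,\alpha)$ is nonzero, which holds because $\lambda>0$ and $K_{n-1,q}^{(j,j)}(\alpha,\alpha)\ge 0$ as it is a sum of squares divided by positive norms, so the fractions appearing in both \eqref{ConxF1} and the statement are well defined. After this observation the proof is a short identification of coefficients.
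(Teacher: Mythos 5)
Your proposal is correct and follows exactly the paper's own argument: the authors likewise prove \eqref{ConexF_I} by substituting the kernel decomposition \eqref{Kernel0j} of Proposition~\ref{S1-LemmaKernel0j}, evaluated at $y=\alpha$, into the connection formula \eqref{ConxF1} and identifying the coefficients of $H_{n}(x;q)$ and $H_{n-1}(x;q)$. Your added remark that $1+\lambda K_{n-1,q}^{(j,j)}(\alpha,\alpha)>0$ (since $\lambda>0$ and the diagonal kernel is a sum of squares over positive norms) is a harmless extra check the paper leaves implicit.
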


\begin{proof}
From \eqref{ConxF1} and Proposition \ref{S1-LemmaKernel0j} we conclude the
result.
\end{proof}

On the other hand, from the previous Lemma and recurrence relation (\ref{ReR}%
) we have the following result 
\begin{equation}
\mathbb{H}_{n-1}(x;q)={\mathcal{E}}_{2,n}(x)H_{n}(x;q)+{\mathcal{F}}%
_{2,n}(x)H_{n-1}(x;q),  \label{ConexF_II}
\end{equation}%
where 
\begin{equation*}
{\mathcal{E}}_{2,n}(x)=-\frac{{\mathcal{F}}_{1,n-1}(x)}{\gamma _{n-1}},
\end{equation*}%
and 
\begin{equation*}
{\mathcal{F}}_{2,n}(x)={\mathcal{E}}_{1,n-1}(x)-x{\mathcal{E}}_{2,n}(x).
\end{equation*}

\begin{lemma}
\label{detxi} Let $\{\mathbb{H}_{n}(x;q)\}_{n\geq 0}$ be the sequence of $q $%
-Hermite I-Sobolev type polynomials of degree $n$. Then, the following
statements hold, 
\begin{equation}
\Xi_{1,n}(x)H_{n}(x;q)=%
\begin{vmatrix}
\mathbb{H}_{n}(x;q) & \mathbb{H}_{n-1}(x;q) \\ 
{\mathcal{F}}_{1,n}(x) & {\mathcal{F}}_{2,n}(x)%
\end{vmatrix}%
,  \label{ConexF_III}
\end{equation}
and 
\begin{equation}
\Xi_{1,n}(x)H_{n-1}(x;q)=-%
\begin{vmatrix}
\mathbb{H}_{n}(x;q) & \mathbb{H}_{n-1}(x;q) \\ 
{\mathcal{E}}_{1,n}(x) & {\mathcal{E}}_{2,n}(x)%
\end{vmatrix}%
,  \label{ConexF_IV}
\end{equation}
where 
\begin{equation*}
\Xi_{1,n}(x)=%
\begin{vmatrix}
{\mathcal{E}}_{1,n}(x) & {\mathcal{E}}_{2,n}(x) \\ 
{\mathcal{F}}_{1,n}(x) & {\mathcal{F}}_{2,n}(x)%
\end{vmatrix}%
.
\end{equation*}
\end{lemma}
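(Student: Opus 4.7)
The plan is to recognize that the two connection formulas (\ref{ConexF_I}) and (\ref{ConexF_II}) already established in the excerpt,
\begin{equation*}
\mathbb{H}_{n}(x;q)=\mathcal{E}_{1,n}(x)H_{n}(x;q)+\mathcal{F}_{1,n}(x)H_{n-1}(x;q),\qquad \mathbb{H}_{n-1}(x;q)=\mathcal{E}_{2,n}(x)H_{n}(x;q)+\mathcal{F}_{2,n}(x)H_{n-1}(x;q),
\end{equation*}
together constitute a $2\times 2$ linear system whose unknowns are $H_{n}(x;q)$ and $H_{n-1}(x;q)$, with coefficient matrix
\begin{equation*}
M(x)=\begin{pmatrix}\mathcal{E}_{1,n}(x) & \mathcal{F}_{1,n}(x) \\ \mathcal{E}_{2,n}(x) & \mathcal{F}_{2,n}(x)\end{pmatrix}.
\end{equation*}
The determinant of the transpose of $M(x)$ is precisely $\Xi_{1,n}(x)$, so the two are equal, and the system can be inverted at the level of polynomial identities.

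The second step is a straightforward application of Cramer's rule: multiplying the system by the adjugate of $M(x)$ produces
\begin{equation*}
\Xi_{1,n}(x)H_{n}(x;q)=\mathcal{F}_{2,n}(x)\mathbb{H}_{n}(x;q)-\mathcal{F}_{1,n}(x)\mathbb{H}_{n-1}(x;q),
\end{equation*}
\begin{equation*}
\Xi_{1,n}(x)H_{n-1}(x;q)=\mathcal{E}_{1,n}(x)\mathbb{H}_{n-1}(x;q)-\mathcal{E}_{2,n}(x)\mathbb{H}_{n}(x;q).
\end{equation*}
The first right-hand side is the expansion of the $2\times 2$ determinant in (\ref{ConexF_III}), while the second is the negative of the determinant in (\ref{ConexF_IV}); recognizing these two determinantal rearrangements finishes the argument.

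Since the identities (\ref{ConexF_III}) and (\ref{ConexF_IV}) are to be read as polynomial identities in $x$, there is no analytic obstacle: we do not need $\Xi_{1,n}(x)$ to be nonzero to justify the manipulation, because the Cramer formulas are just the two linear combinations of the original identities obtained by multiplying the first by $\mathcal{F}_{2,n}(x)$ (resp.\ $\mathcal{E}_{2,n}(x)$) and subtracting the second multiplied by $\mathcal{F}_{1,n}(x)$ (resp.\ $\mathcal{E}_{1,n}(x)$). The only point requiring a little care is bookkeeping of signs in the second identity, so as to match the minus sign appearing on the right-hand side of (\ref{ConexF_IV}); this is the reason the determinant there is written with the rows $(\mathbb{H}_{n},\mathbb{H}_{n-1})$ on top of $(\mathcal{E}_{1,n},\mathcal{E}_{2,n})$ rather than the other way around. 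No step here is substantive: the lemma is a purely linear-algebraic reformulation of the two previously proved connection formulas.
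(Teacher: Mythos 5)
Your proposal is correct and takes essentially the same route as the paper, whose proof consists precisely of multiplying \eqref{ConexF_I} by ${\mathcal{F}}_{2,n}(x)$ and \eqref{ConexF_II} by $-{\mathcal{F}}_{1,n}(x)$ and adding (and analogously for \eqref{ConexF_IV}), i.e.\ exactly the linear combinations your Cramer/adjugate formulation reduces to. Your explicit remark that these are polynomial identities requiring no nonvanishing of $\Xi_{1,n}(x)$ is a correct clarification, but it does not change the argument.
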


\begin{proof}
Let us multiply \eqref{ConexF_I} by ${\mathcal{F}}_{2,n}(x)$ and %
\eqref{ConexF_II} by $-{\mathcal{F}}_{1,n}(x)$. Adding and simplifying the
resulting equations, we deduce \eqref{ConexF_III}. In addition, we can
proceed analogously to get \eqref{ConexF_IV}.
\end{proof}

Finally, we will focus our attention in the representation of $\mathbb{H}%
_{n}(x;q)$ as hypergeometric functions. We omit the details of the proof
that follows a similar analysis to that carried out in \cite%
{costas2018analytic,hermoso2020second}.



\begin{proposition}[Hypergeometric character]
\label{S3-Theorem31} The q-Hermite I-Sobolev type polynomials of higher
order $\mathbb{H}_{n}(x;q)\}_{n\geq 0}$ , have the following hypergeometric
representation:%
\begin{equation}
\mathbb{H}_{n}(x;q)=-\frac{{\mathcal{F}}_{1,n}(x)(1-\psi _{n}(x)q^{-1})q^{%
\binom{n}{2}-n+2}}{[n]_{q}\psi _{n}(x)(1-q)}\times \,_{3}\phi _{2}\left( 
\begin{array}{c}
q^{-n},x^{-1},\psi _{n}(x) \\ 
0,\psi _{n}(x)q^{-1}%
\end{array}%
;q,-qx\right)  \label{ASPTSHR}
\end{equation}
where $\psi _{n}(x)=((1-q)\vartheta _{n}(x)+1)^{-1}$ and 
\begin{equation*}
\vartheta _{n}(x)=-\frac{q^{n-2}[n]_{q}{\mathcal{E}}_{1,n}(x)}{{\mathcal{F}}%
_{1,n}(x)}-[n-1]_{q}.
\end{equation*}
\end{proposition}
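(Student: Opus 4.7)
The plan is to begin from the connection formula (\ref{ConexF_I}), which writes
\[
\mathbb{H}_{n}(x;q)=\mathcal{E}_{1,n}(x)\,H_{n}(x;q)+\mathcal{F}_{1,n}(x)\,H_{n-1}(x;q),
\]
and to substitute the $_{2}\phi_{1}$ representation (\ref{ASP}) for both $H_{n}$ and $H_{n-1}$. The two resulting series differ only through the parameter shift $q^{-n}\mapsto q^{1-n}$ and through the prefactors $q^{\binom{n}{2}}$ versus $q^{\binom{n-1}{2}}$. Using the elementary identity $(q^{1-n};q)_{k}/(q^{-n};q)_{k}=(1-q^{k-n})/(1-q^{-n})$, they can be pulled under a common factor $(q^{-n};q)_{k}(x^{-1};q)_{k}/(q;q)_{k}$, so that the coefficient of $(-qx)^{k}$ in the left-hand side of (\ref{ASPTSHR}) becomes an \emph{affine} function of $q^{k}$ whose two coefficients are explicit in $\mathcal{E}_{1,n}(x)$, $\mathcal{F}_{1,n}(x)$, and powers of $q$.

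Next, I would analyse the target $_{3}\phi_{2}$ series. The crucial observation is the telescoping identity
\[
\frac{(\psi;q)_{k}}{(\psi q^{-1};q)_{k}}=\frac{1-\psi q^{k-1}}{1-\psi q^{-1}},
\]
valid for $\psi=\psi_{n}(x)$. This collapses the $_{3}\phi_{2}$ into
\[
\frac{1}{1-\psi_{n}(x)q^{-1}}\sum_{k\geq 0}\frac{(q^{-n};q)_{k}(x^{-1};q)_{k}}{(q;q)_{k}}\bigl(1-\psi_{n}(x)q^{k-1}\bigr)(-qx)^{k},
\]
whose coefficient of $(-qx)^{k}$ is again affine in $q^{k}$. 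The identity (\ref{ASPTSHR}) then reduces to the termwise equality of two affine-in-$q^{k}$ expressions, i.e.\ to two scalar identities: one for the $q^{k}$-independent part and one for the coefficient of $q^{k}$.

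Using $(1-q)[n]_{q}=1-q^{n}$ and $\binom{n-1}{2}=\binom{n}{2}-n+1$, the condition obtained from the coefficient of $q^{k}$ is solvable in $\psi$ and forces exactly $\psi=\psi_{n}(x)=((1-q)\vartheta_{n}(x)+1)^{-1}$ with $\vartheta_{n}(x)$ as in the statement; a short algebraic verification then shows that the $q^{k}$-independent identity holds automatically for this choice. A sanity check at $k=0$ recovers $\mathcal{E}_{1,n}(x)q^{\binom{n}{2}}+\mathcal{F}_{1,n}(x)q^{\binom{n-1}{2}}$ on both sides, which pins down the normalising prefactor $-\mathcal{F}_{1,n}(x)(1-\psi_{n}(x)q^{-1})q^{\binom{n}{2}-n+2}/([n]_{q}\psi_{n}(x)(1-q))$. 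The main obstacle here is not conceptual but bookkeeping: keeping track of the several $q$-powers ($q^{\binom{n}{2}}$, $q^{\binom{n-1}{2}}$, $q^{-n+2}$, $q^{k-n+1}$) and of the signs arising when one rewrites $1-q^{-n}$, $1-q^{n-1}$, $1-\psi_{n}(x)q^{-1}$, so that the reduction to the definition of $\vartheta_{n}(x)$ appears cleanly. Beyond the telescoping identity no nontrivial $q$-series manipulation is needed, which is why, as the authors indicate, the argument parallels the one carried out in the cited references.
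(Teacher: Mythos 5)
Your argument is correct and is essentially the proof the paper omits by deferring to \cite{costas2018analytic,hermoso2020second}: substitute \eqref{ASP} into the connection formula \eqref{ConexF_I}, use $(q^{1-n};q)_k/(q^{-n};q)_k=(1-q^{k-n})/(1-q^{-n})$ together with the telescoping $(\psi;q)_k/(\psi q^{-1};q)_k=(1-\psi q^{k-1})/(1-\psi q^{-1})$, and match the two resulting affine functions of $q^k$ on both sides of \eqref{ASPTSHR}. One bookkeeping slip, which does not invalidate the verification since you do check both scalar identities: the roles of your two conditions are interchanged --- in the coefficient-of-$q^k$ condition $\psi$ cancels completely, so it is not solvable for $\psi$ but merely confirms the prefactor (via $\binom{n-1}{2}=\binom{n}{2}-n+1$ and $1-q^{-n}=-q^{-n}(1-q^{n})/1$, i.e. $(1-q)[n]_q=1-q^n$), while it is the $q^k$-independent part that determines $\psi$, yielding $\psi_n(x)^{-1}=-q^{n-2}(1-q^{n})\,\mathcal{E}_{1,n}(x)/\mathcal{F}_{1,n}(x)+q^{n-1}=(1-q)\vartheta_n(x)+1$ exactly as stated.
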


\begin{lemma}
\label{DqHS} Let $\{\mathbb{H}_{n}(x;q)\}_{n\geq 0}$ be the sequence of $q$%
-Hermite I-Sobolev type polynomials of degree $n$. Then, the following
statements hold, 
\begin{equation}
{\mathscr D}_{q}\mathbb{H}_{n}(x;q)={\mathcal{E}}_{3,n}(x)H_{n}(x;q)+{%
\mathcal{F}}_{3,n}(x)H_{n-1}(x;q),  \label{DqHsE3F3}
\end{equation}
where 
\begin{equation*}
{\mathcal{E}}_{3,n}(x)=-\lambda\frac{[n]_{q}^{(j)}H_{n-j}(\alpha;q)}{%
1+\lambda K_{n-1,q}^{(j,j)}(\alpha,\alpha)}{\mathcal{C}}_{1,n}(x,\alpha),
\end{equation*}
and 
\begin{equation*}
{\mathcal{F}}_{3,n}(x)=[n]_{q}-\lambda\frac{[n]_{q}^{(j)}H_{n-j}(\alpha;q)}{%
1+\lambda K_{n-1,q}^{(j,j)}(\alpha,\alpha)}{\mathcal{D}}_{1,n}(x,\alpha).
\end{equation*}
\end{lemma}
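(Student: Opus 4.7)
The plan is to combine the expression for $\mathscr{D}_q\mathbb{H}_n(x;q)$ already established in Corollary~\ref{DqDq2HS} with the decomposition of $K_{n-1,q}^{(1,j)}(x,\alpha)$ supplied by Proposition~\ref{S1-LemmaKerneli2}, and then simply match coefficients of $H_n(x;q)$ and $H_{n-1}(x;q)$.

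More concretely, I would begin by writing down, verbatim from Corollary~\ref{DqDq2HS}, the identity
\begin{equation*}
\mathscr{D}_q\mathbb{H}_n(x;q)
=[n]_q H_{n-1}(x;q)
-\lambda\,\frac{[n]_q^{(j)}H_{n-j}(\alpha;q)}{1+\lambda K_{n-1,q}^{(j,j)}(\alpha,\alpha)}\,K_{n-1,q}^{(1,j)}(x,\alpha).
\end{equation*}
Next I would apply Proposition~\ref{S1-LemmaKerneli2}, equation~\eqref{kernel1j}, with $y=\alpha$, to substitute
\begin{equation*}
K_{n-1,q}^{(1,j)}(x,\alpha)=\mathcal{C}_{1,n}(x,\alpha)H_n(x;q)+\mathcal{D}_{1,n}(x,\alpha)H_{n-1}(x;q)
\end{equation*}
into the previous display. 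Collecting the terms proportional to $H_n(x;q)$ and to $H_{n-1}(x;q)$ yields, by inspection, precisely the functions $\mathcal{E}_{3,n}(x)$ and $\mathcal{F}_{3,n}(x)$ defined in the statement, which gives~\eqref{DqHsE3F3}.

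Since both of the ingredients are already at hand, there is no genuine obstacle in this proof; the only point requiring any care is bookkeeping the sign and the common prefactor $\lambda[n]_q^{(j)}H_{n-j}(\alpha;q)/(1+\lambda K_{n-1,q}^{(j,j)}(\alpha,\alpha))$ so that the coefficient of $H_{n-1}(x;q)$ picks up the extra $[n]_q$ coming from the forward shift relation~\eqref{FSop} applied to $\mathscr{D}_qH_n(x;q)$. Accordingly, the whole argument amounts to one substitution followed by a collection of like terms.
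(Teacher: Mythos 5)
Your proof is correct and follows exactly the route of the paper, whose own proof is the one-line observation that the result follows from Corollary~\ref{DqDq2HS} together with Proposition~\ref{S1-LemmaKerneli2}; you simply spell out the substitution of \eqref{kernel1j} at $y=\alpha$ and the collection of coefficients of $H_n(x;q)$ and $H_{n-1}(x;q)$, which the paper leaves implicit. Nothing further is needed.
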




\begin{proof}
From Corollary \ref{DqDq2HS} and Proposition \ref{S1-LemmaKerneli2}, we
deduce the desired result.
\end{proof}



\begin{proposition}
\label{STHST} The $q$-Hermite I-Sobolev type polynomials $\mathbb{H}%
_{n}(x;q) $ of degree $n$ satisfy the following structure relation, 
\begin{equation*}
\Xi_{1,n}(x){\mathscr D}_{q}\mathbb{H}_{n}(x;q)={\mathcal{E}}_{4,n}(x)%
\mathbb{H}_{n}(x;q)+{\mathcal{F}}_{4,n}(x)\mathbb{H}_{n-1}(x;q),
\end{equation*}
where 
\begin{equation*}
{\mathcal{E}}_{4,n}(x)=-%
\begin{vmatrix}
{\mathcal{E}}_{2,n}(x) & {\mathcal{E}}_{3,n}(x) \\ 
{\mathcal{F}}_{2,n}(x) & {\mathcal{F}}_{3,n}(x)%
\end{vmatrix}%
,
\end{equation*}
and 
\begin{equation*}
{\mathcal{F}}_{4,n}(x)=%
\begin{vmatrix}
{\mathcal{E}}_{1,n}(x) & {\mathcal{E}}_{3,n}(x) \\ 
{\mathcal{F}}_{1,n}(x) & {\mathcal{F}}_{3,n}(x)%
\end{vmatrix}%
.
\end{equation*}
\end{proposition}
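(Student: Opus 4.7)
The proof is a piece of linear algebra that combines Lemma~\ref{DqHS} with the Cramer-type identities for $H_n(x;q)$ and $H_{n-1}(x;q)$ packaged in Lemma~\ref{detxi}. The idea is to take the expression of $\mathscr{D}_q\mathbb{H}_n(x;q)$ in the $H$-basis, pass to the $\mathbb{H}$-basis by inverting the $2\times 2$ system (\ref{ConexF_I})--(\ref{ConexF_II}), and then recognise the resulting coefficients as the announced determinants.

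Concretely, my plan is the following. First, I start from the identity provided by Lemma~\ref{DqHS}, namely
\begin{equation*}
\mathscr{D}_q\mathbb{H}_n(x;q)=\mathcal{E}_{3,n}(x)H_n(x;q)+\mathcal{F}_{3,n}(x)H_{n-1}(x;q),
\end{equation*}
and multiply both sides by $\Xi_{1,n}(x)$. Then I substitute the two determinantal expressions (\ref{ConexF_III}) and (\ref{ConexF_IV}) from Lemma~\ref{detxi}, which, once expanded, read
\begin{equation*}
\Xi_{1,n}(x)H_n(x;q)=\mathcal{F}_{2,n}(x)\mathbb{H}_n(x;q)-\mathcal{F}_{1,n}(x)\mathbb{H}_{n-1}(x;q),
\end{equation*}
\begin{equation*}
\Xi_{1,n}(x)H_{n-1}(x;q)=-\mathcal{E}_{2,n}(x)\mathbb{H}_n(x;q)+\mathcal{E}_{1,n}(x)\mathbb{H}_{n-1}(x;q).
\end{equation*}

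Next, I collect the resulting expression according to the basis $\{\mathbb{H}_n(x;q),\mathbb{H}_{n-1}(x;q)\}$. The coefficient of $\mathbb{H}_n(x;q)$ becomes $\mathcal{E}_{3,n}(x)\mathcal{F}_{2,n}(x)-\mathcal{F}_{3,n}(x)\mathcal{E}_{2,n}(x)$, which is exactly $-\det\!\bigl(\begin{smallmatrix}\mathcal{E}_{2,n} & \mathcal{E}_{3,n}\\ \mathcal{F}_{2,n} & \mathcal{F}_{3,n}\end{smallmatrix}\bigr)=\mathcal{E}_{4,n}(x)$. Similarly, the coefficient of $\mathbb{H}_{n-1}(x;q)$ is $\mathcal{E}_{1,n}(x)\mathcal{F}_{3,n}(x)-\mathcal{E}_{3,n}(x)\mathcal{F}_{1,n}(x)=\det\!\bigl(\begin{smallmatrix}\mathcal{E}_{1,n} & \mathcal{E}_{3,n}\\ \mathcal{F}_{1,n} & \mathcal{F}_{3,n}\end{smallmatrix}\bigr)=\mathcal{F}_{4,n}(x)$, yielding the claimed structure relation.

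There is no serious analytic obstacle: once one has the connection formulas (\ref{ConexF_I})--(\ref{ConexF_II}), the $q$-derivative formula of Lemma~\ref{DqHS}, and Cramer's rule packaged as Lemma~\ref{detxi}, the proof is purely algebraic. The only point requiring mild care is sign bookkeeping when expanding the determinants (\ref{ConexF_III}) and (\ref{ConexF_IV}) and matching them against the definitions of $\mathcal{E}_{4,n}(x)$ and $\mathcal{F}_{4,n}(x)$; the identity should be understood as a polynomial identity, so no issue arises at zeros of $\Xi_{1,n}(x)$.
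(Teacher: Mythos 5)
Your proposal is correct and follows essentially the same route as the paper's own proof: multiply the identity of Lemma~\ref{DqHS} by $\Xi_{1,n}(x)$, substitute the Cramer-type expressions \eqref{ConexF_III} and \eqref{ConexF_IV} from Lemma~\ref{detxi} for $\Xi_{1,n}(x)H_{n}(x;q)$ and $\Xi_{1,n}(x)H_{n-1}(x;q)$, and collect the coefficients of $\mathbb{H}_{n}(x;q)$ and $\mathbb{H}_{n-1}(x;q)$ into the determinants defining ${\mathcal{E}}_{4,n}(x)$ and ${\mathcal{F}}_{4,n}(x)$. Your sign bookkeeping agrees exactly with the paper's expansion, so nothing is missing.
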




\begin{proof}
Using Lemma \ref{detxi} and Lemma \ref{DqHS}, respectively, we deduce that%
\begin{equation*}
\begin{vmatrix}
\mathbb{H}_{n}(x;q) & \mathbb{H}_{n-1}(x;q) \\ 
{\mathcal{F}}_{1,n}(x) & {\mathcal{F}}_{2,n}(x)%
\end{vmatrix}%
{\mathcal{E}}_{3,n}(x)-%
\begin{vmatrix}
\mathbb{H}_{n}(x;q) & \mathbb{H}_{n-1}(x;q) \\ 
{\mathcal{E}}_{1,n}(x) & {\mathcal{E}}_{2,n}(x)%
\end{vmatrix}%
{\mathcal{F}}_{3,n}(x)
\end{equation*}%
\begin{eqnarray*}
&&={\mathcal{E}}_{3,n}(x){\mathcal{F}}_{2,n}(x)\mathbb{H}_{n}(x;q)-{\mathcal{%
E}}_{3,n}(x){\mathcal{F}}_{1,n}(x)\mathbb{H}_{n-1}(x;q) \\
&&\qquad \qquad \qquad \qquad \qquad \qquad -{\mathcal{E}}_{2,n}(x){\mathcal{%
F}}_{3,n}(x)\mathbb{H}_{n}(x;q)+{\mathcal{E}}_{1,n}(x){\mathcal{F}}_{3,n}(x)%
\mathbb{H}_{n-1}(x;q)
\end{eqnarray*}%
\begin{equation*}
=-%
\begin{vmatrix}
{\mathcal{E}}_{2,n}(x) & {\mathcal{E}}_{3,n}(x) \\ 
{\mathcal{F}}_{2,n}(x) & {\mathcal{F}}_{3,n}(x)%
\end{vmatrix}%
\mathbb{H}_{n}(x;q)+%
\begin{vmatrix}
{\mathcal{E}}_{1,n}(x) & {\mathcal{E}}_{3,n}(x) \\ 
{\mathcal{F}}_{1,n}(x) & {\mathcal{F}}_{3,n}(x)%
\end{vmatrix}%
\mathbb{H}_{n-1}(x;q).
\end{equation*}
This concludes the result.
\end{proof}



\begin{lemma}
\label{Dq2HS} Let $\{\mathbb{H}_{n}(x;q)\}_{n\geq 0}$ be the sequence of $q$%
-Hermite I-Sobolev type polynomials of degree $n$. Then, the following
statements hold 
\begin{equation}
{\mathscr D}_{q}^{2}\mathbb{H}_{n}(x;q)={\mathcal{E}}_{5,n}(x)H_{n}(x;q)+{%
\mathcal{F}}_{5,n}(x)H_{n-1}(x;q),  \label{Dq2Hs}
\end{equation}%
where 
\begin{equation*}
{\mathcal{E}}_{5,n}(x)=-[n]_{q}^{(2)}\gamma _{n-1}^{-1}-\lambda \frac{%
\lbrack n]_{q}^{(j)}H_{n-j}(\alpha ;q)}{1+\lambda K_{n-1,q}^{(j,j)}(\alpha
,\alpha )}{\mathcal{C}}_{2,n}(x,\alpha ),
\end{equation*}%
and%
\begin{equation*}
{\mathcal{F}}_{5,n}(x)=[n]_{q}^{(2)}\gamma _{n-1}^{-1}x-\lambda \frac{%
\lbrack n]_{q}^{(j)}H_{n-j}(\alpha ;q)}{1+\lambda K_{n-1,q}^{(j,j)}(\alpha
,\alpha )}{\mathcal{D}}_{2,n}(x,\alpha ).
\end{equation*}
\end{lemma}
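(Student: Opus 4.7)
The plan is to start from the formula for $\mathscr{D}_q^{2}\mathbb{H}_n(x;q)$ already established in Corollary \ref{DqDq2HS}, namely
\begin{equation*}
\mathscr{D}_q^{2}\mathbb{H}_n(x;q)=[n]_q^{(2)}H_{n-2}(x;q)-\lambda\frac{[n]_q^{(j)}H_{n-j}(\alpha;q)}{1+\lambda K_{n-1,q}^{(j,j)}(\alpha,\alpha)}K_{n-1,q}^{(2,j)}(x,\alpha),
\end{equation*}
and rewrite both pieces so that only $H_n(x;q)$ and $H_{n-1}(x;q)$ appear.

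The first step handles the kernel term: by Proposition \ref{S1-LemmaKerneli2} (formula \eqref{kernel2j}), one substitutes
\begin{equation*}
K_{n-1,q}^{(2,j)}(x,\alpha)=\mathcal{C}_{2,n}(x,\alpha)H_n(x;q)+\mathcal{D}_{2,n}(x,\alpha)H_{n-1}(x;q),
\end{equation*}
which already contributes the $-\lambda[n]_q^{(j)}H_{n-j}(\alpha;q)(1+\lambda K_{n-1,q}^{(j,j)}(\alpha,\alpha))^{-1}\mathcal{C}_{2,n}(x,\alpha)$ summand to $\mathcal{E}_{5,n}(x)$ and the analogous one with $\mathcal{D}_{2,n}$ to $\mathcal{F}_{5,n}(x)$.

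The second step disposes of $H_{n-2}(x;q)$. From the three-term recurrence \eqref{ReR} applied at index $n-1$, $xH_{n-1}(x;q)=H_n(x;q)+\gamma_{n-1}H_{n-2}(x;q)$, so
\begin{equation*}
H_{n-2}(x;q)=-\gamma_{n-1}^{-1}H_n(x;q)+\gamma_{n-1}^{-1}xH_{n-1}(x;q).
\end{equation*}
Multiplying by $[n]_q^{(2)}$ gives the remaining contributions $-[n]_q^{(2)}\gamma_{n-1}^{-1}$ to $\mathcal{E}_{5,n}(x)$ and $[n]_q^{(2)}\gamma_{n-1}^{-1}x$ to $\mathcal{F}_{5,n}(x)$. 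Collecting the two summands in each coefficient reproduces exactly the formulas in the statement, so no real obstacle is expected; the whole argument is a two-line substitution. The only point to be careful about is that one uses the correct $\gamma_{n-1}=q^{n-2}(1-q^{n-1})$ appearing in \eqref{ReR}, and the iterated-forward-shift form of $[n]_q^{(2)}$, so that the combination matches the analogous step already carried out in the proof of Proposition \ref{S1-LemmaKerneli2}.
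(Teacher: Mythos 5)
Your proof is correct and follows exactly the same route as the paper's own proof, which simply declares the result ``straightforward to check'' from Corollary \ref{DqDq2HS}, the recurrence relation \eqref{ReR}, and Proposition \ref{S1-LemmaKerneli2}. You have merely made the substitutions explicit --- inserting \eqref{kernel2j} for the kernel term and eliminating $H_{n-2}(x;q)$ via $H_{n-2}(x;q)=\gamma_{n-1}^{-1}\bigl(xH_{n-1}(x;q)-H_{n}(x;q)\bigr)$ with the correct $\gamma_{n-1}=q^{n-2}(1-q^{n-1})$ --- and this reproduces ${\mathcal{E}}_{5,n}(x)$ and ${\mathcal{F}}_{5,n}(x)$ exactly as stated.
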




\begin{proof}
It is straightforward to check the result departing from Corollary \ref%
{DqDq2HS}, the recurrence relation (\ref{ReR}) and Proposition \ref%
{S1-LemmaKerneli2}, deducing \eqref{Dq2Hs}.
\end{proof}



\begin{proposition}
\label{2SR} The $q$-Hermite I-Sobolev type polynomials $\mathbb{H}_{n}(x;q)$
of degree $n$ satisfy the following relation, 
\begin{equation*}
\Xi_{1,n}(x){\mathscr D}_{q}^2\mathbb{H}_{n}(x;q)={\mathcal{E}}_{6,n}(x)%
\mathbb{H}_{n}(x;q)+{\mathcal{F}}_{6,n}(x)\mathbb{H}_{n-1}(x;q),
\end{equation*}
where 
\begin{equation*}
{\mathcal{E}}_{6,n}(x)=-%
\begin{vmatrix}
{\mathcal{E}}_{2,n}(x) & {\mathcal{E}}_{5,n}(x) \\ 
{\mathcal{F}}_{2,n}(x) & {\mathcal{F}}_{5,n}(x)%
\end{vmatrix}%
,
\end{equation*}
and 
\begin{equation*}
{\mathcal{F}}_{6,n}(x)=%
\begin{vmatrix}
{\mathcal{E}}_{1,n}(x) & {\mathcal{E}}_{5,n}(x) \\ 
{\mathcal{F}}_{1,n}(x) & {\mathcal{F}}_{5,n}(x)%
\end{vmatrix}%
.
\end{equation*}
\end{proposition}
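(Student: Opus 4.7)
The plan is to mimic the proof of Proposition \ref{STHST} almost verbatim, with Lemma \ref{Dq2HS} replacing Lemma \ref{DqHS}. The strategy is purely algebraic: eliminate $H_n(x;q)$ and $H_{n-1}(x;q)$ from the expansion of ${\mathscr D}_q^2\mathbb{H}_n(x;q)$ in \eqref{Dq2Hs} by using the determinantal identities \eqref{ConexF_III} and \eqref{ConexF_IV} from Lemma \ref{detxi}, so that everything is expressed in terms of $\mathbb{H}_n(x;q)$ and $\mathbb{H}_{n-1}(x;q)$.

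First I would multiply \eqref{Dq2Hs} through by $\Xi_{1,n}(x)$, obtaining
\begin{equation*}
\Xi_{1,n}(x){\mathscr D}_q^{2}\mathbb{H}_{n}(x;q)=\mathcal{E}_{5,n}(x)\bigl(\Xi_{1,n}(x)H_{n}(x;q)\bigr)+\mathcal{F}_{5,n}(x)\bigl(\Xi_{1,n}(x)H_{n-1}(x;q)\bigr).
\end{equation*}
Then I would substitute the right-hand sides of \eqref{ConexF_III} and \eqref{ConexF_IV} for the two bracketed terms. After expanding the two $2\times 2$ determinants and collecting the coefficients of $\mathbb{H}_n(x;q)$ and $\mathbb{H}_{n-1}(x;q)$ separately, the coefficient of $\mathbb{H}_n(x;q)$ becomes $\mathcal{E}_{5,n}\mathcal{F}_{2,n}-\mathcal{F}_{5,n}\mathcal{E}_{2,n}$, which is exactly the determinant defining $\mathcal{E}_{6,n}(x)$ in the statement, while the coefficient of $\mathbb{H}_{n-1}(x;q)$ becomes $\mathcal{E}_{1,n}\mathcal{F}_{5,n}-\mathcal{F}_{1,n}\mathcal{E}_{5,n}$, matching $\mathcal{F}_{6,n}(x)$.

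There is no genuine obstacle here; the proof is bookkeeping. The nontrivial content has already been invested in Lemma \ref{Dq2HS} (which uses Corollary \ref{DqDq2HS}, the three-term recurrence \eqref{ReR} and Proposition \ref{S1-LemmaKerneli2}) and in the determinantal inversion of Lemma \ref{detxi}. The one point requiring minor care is the sign in \eqref{ConexF_IV}: the minus in front of the determinant for $\Xi_{1,n}H_{n-1}$ is what flips $-\mathcal{E}_{5,n}\mathcal{F}_{1,n}+\mathcal{F}_{5,n}\mathcal{E}_{1,n}$ into the positive determinant $\mathcal{F}_{6,n}(x)$, and analogously produces the leading minus sign in $\mathcal{E}_{6,n}(x)$. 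Once signs and column orders are tracked consistently with the convention used for Proposition \ref{STHST}, the identity drops out immediately.
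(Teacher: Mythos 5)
Your proposal is correct and matches the paper's own argument: the published proof of Proposition \ref{2SR} consists precisely of applying Lemma \ref{detxi} and Lemma \ref{Dq2HS}, i.e., multiplying \eqref{Dq2Hs} by $\Xi_{1,n}(x)$, substituting \eqref{ConexF_III} and \eqref{ConexF_IV}, and collecting coefficients, exactly as in the detailed computation the paper writes out for Proposition \ref{STHST}. Your sign bookkeeping is also accurate, including the observation that the minus sign in \eqref{ConexF_IV} is what yields the positive determinant ${\mathcal{F}}_{6,n}(x)$ and the leading minus sign in ${\mathcal{E}}_{6,n}(x)$.
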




\begin{proof}
From the application of Lemma \ref{detxi} and Lemma \ref{Dq2HS} we reach the
conclusion.
\end{proof}



\section{Recurrence relation and $q$-difference equations}

\label{secpral}



This main section of the present work states a three-term recurrence
relation for the sequence of polynomials $\{\mathbb{H}_{n}(x;q)\}_{n\geq 0}$%
, together with the establishment of two $q$-difference equations satisfied
by the polynomials in $\{\mathbb{H}_{n}(x;q)\}_{n\geq 0}$.



\begin{theorem}
\label{S4-Theor3TRR-RC}Let $\{\mathbb{H}_{n}(x;q)\}_{n\geq 0}$ be the
sequence of $q$-Hermite I-Sobolev type polynomials of degree $n$. Then, $%
\mathbb{H}_{n}(x;q)$ satisfies the following three-term recurrence
relations, 
\begin{equation*}
\Xi_{2,n}(x)\mathbb{H}_{n+1}(x;q)=\alpha_{n}(x)\mathbb{H}_{n}(x;q)+%
\beta_{n}(x)\mathbb{H}_{n-1}(x;q),
\end{equation*}
where 
\begin{equation*}
\Xi_{2,n}(x)_{n}(x)=\Xi_{1,n}(x){\mathcal{E}}_{4,n+1}(x),
\end{equation*}
\begin{equation*}
\alpha_{n}(x)=\Xi_{1,n+1}(x){\mathcal{E}}_{8,n}(x)-\Xi_{1,n}(x){\mathcal{F}}%
_{4,n+1}(x),
\end{equation*}
and 
\begin{equation*}
\beta_{n}(x)=\Xi_{1,n+1}(x){\mathcal{F}}_{8,n}(x).
\end{equation*}
\end{theorem}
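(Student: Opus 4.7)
The plan is to produce two independent expressions for $\mathscr{D}_{q}\mathbb{H}_{n+1}(x;q)$ as a linear combination of $\mathbb{H}_{n+1}$, $\mathbb{H}_{n}$, $\mathbb{H}_{n-1}$, and then eliminate the $q$-derivative between them, which forces a three-term recurrence on the Sobolev-type family. The coefficients $\mathcal{E}_{8,n}$, $\mathcal{F}_{8,n}$ referenced in the statement are precisely the ones produced in the second of these two expressions.

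First, I would invoke Proposition~\ref{STHST} at index $n+1$ to obtain the identity
\begin{equation*}
\Xi_{1,n+1}(x)\,\mathscr{D}_{q}\mathbb{H}_{n+1}(x;q)=\mathcal{E}_{4,n+1}(x)\,\mathbb{H}_{n+1}(x;q)+\mathcal{F}_{4,n+1}(x)\,\mathbb{H}_{n}(x;q).
\end{equation*}
This is a relation at ``height $n+1$'' that still carries $\mathbb{H}_{n+1}$.

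Second, I would lower the index by one in a different way. From Lemma~\ref{DqHS} applied at $n+1$, one has $\mathscr{D}_{q}\mathbb{H}_{n+1}(x;q)=\mathcal{E}_{3,n+1}(x)H_{n+1}(x;q)+\mathcal{F}_{3,n+1}(x)H_{n}(x;q)$. Substituting the three-term recurrence (\ref{ReR}), namely $H_{n+1}(x;q)=xH_{n}(x;q)-\gamma_{n}H_{n-1}(x;q)$, gives
\begin{equation*}
\mathscr{D}_{q}\mathbb{H}_{n+1}(x;q)=\bigl[x\mathcal{E}_{3,n+1}(x)+\mathcal{F}_{3,n+1}(x)\bigr]H_{n}(x;q)-\gamma_{n}\mathcal{E}_{3,n+1}(x)H_{n-1}(x;q).
\end{equation*}
Now the basis $\{H_{n},H_{n-1}\}$ can be exchanged for $\{\mathbb{H}_{n},\mathbb{H}_{n-1}\}$ by Lemma~\ref{detxi}, which yields $\Xi_{1,n}(x)H_{n}$ and $\Xi_{1,n}(x)H_{n-1}$ as explicit $2\times 2$ determinants in $\mathbb{H}_{n},\mathbb{H}_{n-1}$. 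Multiplying the preceding equality by $\Xi_{1,n}(x)$ and collecting coefficients, the relation becomes
\begin{equation*}
\Xi_{1,n}(x)\,\mathscr{D}_{q}\mathbb{H}_{n+1}(x;q)=\mathcal{E}_{8,n}(x)\,\mathbb{H}_{n}(x;q)+\mathcal{F}_{8,n}(x)\,\mathbb{H}_{n-1}(x;q),
\end{equation*}
where $\mathcal{E}_{8,n}$ and $\mathcal{F}_{8,n}$ are the determinantal coefficients emerging from this substitution (built from $\mathcal{E}_{1,n},\mathcal{F}_{1,n},\mathcal{E}_{2,n},\mathcal{F}_{2,n},\mathcal{E}_{3,n+1},\mathcal{F}_{3,n+1}$ and $\gamma_{n}$).

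Finally, I would eliminate $\mathscr{D}_{q}\mathbb{H}_{n+1}(x;q)$ by multiplying the first identity by $\Xi_{1,n}(x)$ and the second by $\Xi_{1,n+1}(x)$, then subtracting. The $q$-derivative cancels, and a direct rearrangement yields
\begin{equation*}
\Xi_{1,n}(x)\mathcal{E}_{4,n+1}(x)\,\mathbb{H}_{n+1}(x;q)=\bigl[\Xi_{1,n+1}(x)\mathcal{E}_{8,n}(x)-\Xi_{1,n}(x)\mathcal{F}_{4,n+1}(x)\bigr]\mathbb{H}_{n}(x;q)+\Xi_{1,n+1}(x)\mathcal{F}_{8,n}(x)\,\mathbb{H}_{n-1}(x;q),
\end{equation*}
which is the stated recurrence upon reading off $\Xi_{2,n}$, $\alpha_{n}$ and $\beta_{n}$. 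The only non-routine part of the argument is the second step: setting up $\mathcal{E}_{8,n}$ and $\mathcal{F}_{8,n}$ correctly requires careful bookkeeping of the determinants produced when passing from the $H$-basis to the $\mathbb{H}$-basis, since the recurrence (\ref{ReR}) entangles three consecutive indices while Lemma~\ref{detxi} supplies only two. Everything else is a linear-algebra elimination, so the proof reduces to a clean algebraic verification once this intermediate identity is in place.
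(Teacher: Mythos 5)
Your proposal is correct and is essentially the paper's own proof: the paper likewise shifts Lemma~\ref{DqHS} to index $n+1$, inserts the recurrence \eqref{ReR}, converts to the $\{\mathbb{H}_{n},\mathbb{H}_{n-1}\}$ basis via Lemma~\ref{detxi} to obtain $\Xi_{1,n}(x)\,{\mathscr D}_{q}\mathbb{H}_{n+1}(x;q)={\mathcal{E}}_{8,n}(x)\mathbb{H}_{n}(x;q)+{\mathcal{F}}_{8,n}(x)\mathbb{H}_{n-1}(x;q)$, and then eliminates the $q$-derivative against Proposition~\ref{STHST} at index $n+1$ by cross-multiplying with $\Xi_{1,n}(x)$ and $\Xi_{1,n+1}(x)$, exactly as you do. The determinantal coefficients you leave implicit are precisely the paper's, which names the intermediate pair ${\mathcal{E}}_{7,n}(x)=x{\mathcal{E}}_{3,n+1}(x)+{\mathcal{F}}_{3,n+1}(x)$ and ${\mathcal{F}}_{7,n}(x)=-\gamma_{n}{\mathcal{E}}_{3,n+1}(x)$ before assembling ${\mathcal{E}}_{8,n}$ and ${\mathcal{F}}_{8,n}$.
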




\begin{proof}
Shifting the index in \eqref{DqHsE3F3} from $n$ to $n+1$ and using the
recurrence relation \eqref{ReR}, yields 
\begin{equation*}
{\mathscr D}_{q}\mathbb{H}_{n+1}(x;q)={\mathcal{E}}_{7,n}(x)H_{n}(x;q)+{%
\mathcal{F}}_{7,n}(x)H_{n-1}(x;q),
\end{equation*}%
where 
\begin{equation*}
{\mathcal{E}}_{7,n}(x)=x{\mathcal{E}}_{3,n+1}(x)+{\mathcal{F}}%
_{3,n+1}(x),\quad \mbox{and}\quad {\mathcal{F}}_{7,n}(x)=-\gamma _{n}{%
\mathcal{E}}_{3,n+1}(x).
\end{equation*}%
Then, using Lemma \ref{detxi}, we deduce 
\begin{equation}
\Xi _{1,n}(x){\mathscr D}_{q}\mathbb{H}_{n+1}(x;q)={\mathcal{E}}_{8,n}(x)%
\mathbb{H}_{n}(x;q)+{\mathcal{F}}_{8,n}(x)\mathbb{H}_{n-1}(x;q),
\label{DqHsnm1}
\end{equation}%
where 
\begin{equation*}
{\mathcal{E}}_{8,n}(x)=-%
\begin{vmatrix}
{\mathcal{E}}_{2,n}(x) & {\mathcal{E}}_{7,n}(x) \\ 
{\mathcal{F}}_{2,n}(x) & {\mathcal{F}}_{7,n}(x)%
\end{vmatrix}%
,
\end{equation*}%
and 
\begin{equation*}
{\mathcal{F}}_{8,n}(x)=%
\begin{vmatrix}
{\mathcal{E}}_{1,n}(x) & {\mathcal{E}}_{7,n}(x) \\ 
{\mathcal{F}}_{1,n}(x) & {\mathcal{F}}_{7,n}(x)%
\end{vmatrix}%
.
\end{equation*}%
And the other hand, from Proposition \ref{STHST} we have%
\begin{equation*}
\Xi _{1,n+1}(x)\Xi _{1,n}(x){\mathscr D}_{q}\mathbb{H}_{n+1}(x;q)=
\end{equation*}%
\begin{equation*}
\Xi _{1,n}(x){\mathcal{E}}_{4,n+1}(x)\mathbb{H}_{n+1}(x;q)+\Xi _{1,n}(x){%
\mathcal{F}}_{4,n+1}(x)\mathbb{H}_{n}(x;q).
\end{equation*}
Finally, substituting \eqref{DqHsnm1} in the previous expression, we get the
desired result.
\end{proof}



\begin{theorem}[Second order difference equation, I]
\label{sordDEqI} Let $\{\mathbb{H}_{n}(x;q)\}_{n\geq 0}$ be the sequence of $%
q$-Hermite I-Sobolev type polynomials defined by \eqref{ASPTSHR}. Then, the
following statement holds, 
\begin{equation}
{\mathcal{R}}_{n}(x){\mathscr D}_{q}^{2}\mathbb{H}_{n}(x;q)+{\mathcal{S}}%
_{n}(x){\mathscr D}_{q}\mathbb{H}_{n}(x;q)+{\mathcal{T}}_{n}(x)\mathbb{H}%
_{n}(x;q)=0,\quad n\geq 0,  \label{SDEq1}
\end{equation}
where 
\begin{equation*}
{\mathcal{R}}_{n}(x)={\mathcal{F}}_{4,n}(x)\Xi_{1,n}(x),
\end{equation*}
\begin{equation*}
{\mathcal{S}}_{n}(x) =-{\mathcal{F}}_{6,n}(x)\Xi_{1,n}(x),
\end{equation*}
and 
\begin{equation*}
{\mathcal{T}}_{n}(x) ={\mathcal{E}}_{4,n}(x){\mathcal{F}}_{6,n}(x)-{\mathcal{%
E}}_{6,n}(x){\mathcal{F}}_{4,n}(x).
\end{equation*}
\end{theorem}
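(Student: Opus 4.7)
The plan is to view the two structure relations established earlier, namely Proposition \ref{STHST} and Proposition \ref{2SR}, as a $2\times 2$ linear system in the unknowns $\mathbb{H}_n(x;q)$ and $\mathbb{H}_{n-1}(x;q)$, and then eliminate the latter to obtain an identity involving only $\mathbb{H}_n(x;q)$, $\mathscr{D}_q \mathbb{H}_n(x;q)$, and $\mathscr{D}_q^2 \mathbb{H}_n(x;q)$. No new identities will be needed; the work is entirely algebraic elimination using results already proved.

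More concretely, I would start by rewriting the two structure relations in the compact form
\begin{equation*}
\Xi_{1,n}(x)\,\mathscr{D}_q \mathbb{H}_n(x;q) = \mathcal{E}_{4,n}(x)\,\mathbb{H}_n(x;q) + \mathcal{F}_{4,n}(x)\,\mathbb{H}_{n-1}(x;q),
\end{equation*}
\begin{equation*}
\Xi_{1,n}(x)\,\mathscr{D}_q^2 \mathbb{H}_n(x;q) = \mathcal{E}_{6,n}(x)\,\mathbb{H}_n(x;q) + \mathcal{F}_{6,n}(x)\,\mathbb{H}_{n-1}(x;q).
\end{equation*}
To eliminate $\mathbb{H}_{n-1}(x;q)$, I multiply the first equation by $\mathcal{F}_{6,n}(x)$, the second by $\mathcal{F}_{4,n}(x)$, and subtract the resulting first from the second. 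The $\mathbb{H}_{n-1}$ terms cancel, leaving
\begin{equation*}
\mathcal{F}_{4,n}(x)\Xi_{1,n}(x)\,\mathscr{D}_q^2 \mathbb{H}_n(x;q) - \mathcal{F}_{6,n}(x)\Xi_{1,n}(x)\,\mathscr{D}_q \mathbb{H}_n(x;q) + \bigl(\mathcal{E}_{4,n}(x)\mathcal{F}_{6,n}(x) - \mathcal{E}_{6,n}(x)\mathcal{F}_{4,n}(x)\bigr)\mathbb{H}_n(x;q) = 0,
\end{equation*}
which upon identifying the coefficients with $\mathcal{R}_n(x)$, $\mathcal{S}_n(x)$, and $\mathcal{T}_n(x)$ is exactly \eqref{SDEq1}.

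There is essentially no obstacle of a conceptual nature; this is a Cramer-style linear elimination. The only point that requires a brief comment is that the relation is understood as a polynomial identity valid for all $x$ in the underlying field of rational functions, so one does not need to worry about dividing by $\Xi_{1,n}(x)$ or $\mathcal{F}_{4,n}(x)$. If one wishes to present the argument as a genuine Cramer's rule, one can write the two structure relations as the matrix equation
\begin{equation*}
\begin{pmatrix} \mathcal{E}_{4,n}(x) & \mathcal{F}_{4,n}(x) \\ \mathcal{E}_{6,n}(x) & \mathcal{F}_{6,n}(x) \end{pmatrix} \begin{pmatrix} \mathbb{H}_n(x;q) \\ \mathbb{H}_{n-1}(x;q) \end{pmatrix} = \Xi_{1,n}(x) \begin{pmatrix} \mathscr{D}_q \mathbb{H}_n(x;q) \\ \mathscr{D}_q^2 \mathbb{H}_n(x;q) \end{pmatrix},
\end{equation*}
and then read off the second-order $q$-difference equation as the compatibility condition coming from multiplying the augmented coefficient column by the cofactor row associated to $\mathbb{H}_{n-1}$. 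The potentially delicate point is bookkeeping of signs, but with the $-\mathcal{F}_{6,n}\Xi_{1,n}$ in $\mathcal{S}_n(x)$ and the fixed sign convention in the definition of $\mathcal{T}_n(x)$, everything matches the statement directly.
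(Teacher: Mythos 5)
Your proof is correct and follows essentially the same route as the paper: the paper also eliminates $\mathbb{H}_{n-1}(x;q)$ between Proposition \ref{STHST} and Proposition \ref{2SR}, by solving \eqref{f4n} for $\mathcal{F}_{4,n}(x)\mathbb{H}_{n-1}(x;q)$ and substituting into the second relation multiplied by $\mathcal{F}_{4,n}(x)$, which is algebraically identical to your cross-multiplication by $\mathcal{F}_{6,n}(x)$ and $\mathcal{F}_{4,n}(x)$ followed by subtraction. Your signs and the identification of $\mathcal{R}_n(x)$, $\mathcal{S}_n(x)$, $\mathcal{T}_n(x)$ all match the statement.
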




\begin{proof}
We have from Proposition \ref{STHST} that 
\begin{equation}
{\mathcal{F}}_{4,n}(x)\mathbb{H}_{n-1}(x;q)=\Xi _{1,n}(x){\mathscr D}_{q}%
\mathbb{H}_{n}(x;q)-{\mathcal{E}}_{4,n}(x)\mathbb{H}_{n}(x;q).  \label{f4n}
\end{equation}%
The application of Proposition \ref{2SR} yields%
\begin{equation*}
{\mathcal{F}}_{4,n}(x)\Xi _{1,n}(x){\mathscr D}_{q}^{2}\mathbb{H}_{n}(x;q)=
\end{equation*}
\begin{equation*}
{\mathcal{E}}_{6,n}(x){\mathcal{F}}_{4,n}(x)\mathbb{H}_{n}(x;q)+{\mathcal{F}}%
_{6,n}(x){\mathcal{F}}_{4,n}(x)\mathbb{H}_{n-1}(x;q).
\end{equation*}%
Then, from \eqref{f4n} we get%
\begin{equation*}
{\mathcal{F}}_{4,n}(x)\Xi _{1,n}(x){\mathscr D}_{q}^{2}\mathbb{H}_{n}(x;q)
\end{equation*}
\begin{equation*}
={\mathcal{E}}_{6,n}(x){\mathcal{F}}_{4,n}(x)\mathbb{H}_{n}(x;q)+{\mathcal{F}%
}_{6,n}(x)[\Xi _{1,n}(x){\mathscr D}_{q}\mathbb{H}_{n}(x;q)-{\mathcal{E}}%
_{4,n}(x)\mathbb{H}_{n}(x;q)].
\end{equation*}%
Thus%
\begin{equation*}
{\mathcal{F}}_{4,n}(x)\Xi _{1,n}(x){\mathscr D}_{q}^{2}\mathbb{H}_{n}(x;q)-{%
\mathcal{E}}_{6,n}(x){\mathcal{F}}_{4,n}(x)\mathbb{H}_{n}(x;q)
\end{equation*}
\begin{equation*}
-{\mathcal{F}}_{6,n}(x)\Xi _{1,n}(x){\mathscr D}_{q}\mathbb{H}_{n}(x;q)+{%
\mathcal{E}}_{4,n}(x){\mathcal{F}}_{6,n}(x)\mathbb{H}_{n}(x;q)=0.
\end{equation*}%
Therefore, reagrouping the terms yields the conclusion 
\begin{equation*}
{\mathcal{F}}_{4,n}(x)\Xi _{1,n}(x){\mathscr D}_{q}^{2}\mathbb{H}_{n}(x;q)-{%
\mathcal{F}}_{6,n}(x)\Xi _{1,n}(x){\mathscr D}_{q}\mathbb{H}_{n}(x;q)
\end{equation*}%
\begin{equation*}
+[{\mathcal{E}}_{4,n}(x){\mathcal{F}}_{6,n}(x)-{\mathcal{E}}_{6,n}(x){%
\mathcal{F}}_{4,n}(x)]\mathbb{H}_{n}(x;q).
\end{equation*}
\end{proof}



A similar analysis to that carried out in \cite{hermoso2020second} yields
the following result.



\begin{theorem}[Second order difference equation II]
\label{sordDEqII} Let $\{\mathbb{H}_{n}(x;q)\}_{n\geq 0}$ be the sequence of 
$q$-Hermite I-Sobolev type polynomials defined by \eqref{ASPTSHR}. Then, the
following statement holds, 
\begin{equation*}
\overline{{\mathcal{R}}}_{n}(x){\mathscr D}_{q^{-1}}{\mathscr D}_{q}\mathbb{H%
}_{n}(x;q)+\overline{{\mathcal{S}}}_{n}(x){\mathscr D}_{q^{-1}}\mathbb{H}%
_{n}(x;q)+\overline{{\mathcal{T}}}_{n}(x)\mathbb{H}_{n}(x;q)=0,\quad n\geq 0,
\end{equation*}
where 
\begin{equation*}
\overline{{\mathcal{R}}}_{n}(x)={\mathcal{R}}_{n}(q^{-1 }x),
\end{equation*}
\begin{equation*}
\overline{{\mathcal{S}}}_{n}(x) ={\mathcal{S}}_{n}(q^{-1 }x)+(q^{-1}-1)x{%
\mathcal{T}}_{n}(q^{-1}x),
\end{equation*}
and 
\begin{equation*}
\overline{{\mathcal{T}}}_{n}(x) ={\mathcal{T}}_{n}(q^{-1 }x).
\end{equation*}
\end{theorem}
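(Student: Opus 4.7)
The plan is to deduce Theorem~\ref{sordDEqII} directly from Theorem~\ref{sordDEqI} by a change of variable $x\mapsto q^{-1}x$, together with the operator identities derived from \eqref{DqProp}. There is no need to revisit the connection formulas or the determinantal identities of Section~\ref{S3-ConnForm}; everything sits at the level of algebraic manipulation of $q$-difference operators.

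First I would evaluate the first $q$-difference equation \eqref{SDEq1} at the point $q^{-1}x$, obtaining
\begin{equation*}
\mathcal{R}_{n}(q^{-1}x)(\mathscr{D}_{q}^{2}\mathbb{H}_{n})(q^{-1}x;q)+\mathcal{S}_{n}(q^{-1}x)(\mathscr{D}_{q}\mathbb{H}_{n})(q^{-1}x;q)+\mathcal{T}_{n}(q^{-1}x)\mathbb{H}_{n}(q^{-1}x;q)=0.
\end{equation*}
Next, I would use property \eqref{DqProp}, namely $(\mathscr{D}_{q}f)(z)=(\mathscr{D}_{q^{-1}}f)(qz)$, specialized at $z=q^{-1}x$, which yields $(\mathscr{D}_{q}\mathbb{H}_{n})(q^{-1}x;q)=(\mathscr{D}_{q^{-1}}\mathbb{H}_{n})(x;q)$. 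Applying the same identity to $f=\mathscr{D}_{q}\mathbb{H}_{n}$ gives $(\mathscr{D}_{q}^{2}\mathbb{H}_{n})(q^{-1}x;q)=(\mathscr{D}_{q^{-1}}\mathscr{D}_{q}\mathbb{H}_{n})(x;q)$, thus transforming the two operator terms into the desired form acting on the variable $x$.

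The only term that does not immediately acquire the right shape is the zeroth-order one, $\mathbb{H}_{n}(q^{-1}x;q)$. For this I would invoke the very definition of $\mathscr{D}_{q^{-1}}$ in the form
\begin{equation*}
\mathbb{H}_{n}(q^{-1}x;q)=\mathbb{H}_{n}(x;q)+(q^{-1}-1)x\,(\mathscr{D}_{q^{-1}}\mathbb{H}_{n})(x;q),
\end{equation*}
which follows from the definition \eqref{eulerjackson} rearranged. Substituting this into the previous equation and grouping coefficients of $\mathscr{D}_{q^{-1}}\mathscr{D}_{q}\mathbb{H}_{n}(x;q)$, $\mathscr{D}_{q^{-1}}\mathbb{H}_{n}(x;q)$ and $\mathbb{H}_{n}(x;q)$ produces precisely the coefficient functions $\overline{\mathcal{R}}_{n}$, $\overline{\mathcal{S}}_{n}$ and $\overline{\mathcal{T}}_{n}$ stated in the theorem.

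I do not anticipate a serious obstacle; the whole argument is a bookkeeping exercise. The most delicate point is correctly composing the operator identity $(\mathscr{D}_{q}^{2}f)(q^{-1}x)=(\mathscr{D}_{q^{-1}}\mathscr{D}_{q}f)(x)$ without mixing it up with the alternative commutation rule \eqref{DqProOK}, which would introduce a spurious factor of $q^{-1}$. Keeping the chain of substitutions in \eqref{DqProp} straight, and paying attention to the factor $(q^{-1}-1)x$ coming from rewriting $\mathbb{H}_n(q^{-1}x;q)$, is enough to match the stated $\overline{\mathcal{S}}_{n}$ exactly.
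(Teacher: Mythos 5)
Your proof is correct, and it reaches the theorem by the same overall strategy as the paper --- transporting the first $q$-difference equation \eqref{SDEq1} into equation II through the interplay of \eqref{DqProp} with the substitution $x\mapsto q^{-1}x$, and then absorbing the zeroth-order mismatch $\mathbb{H}_n(q^{-1}x;q)=\mathbb{H}_n(x;q)+(q^{-1}-1)x({\mathscr D}_{q^{-1}}\mathbb{H}_n)(x;q)$ into $\overline{{\mathcal{S}}}_n$ --- but your execution is genuinely leaner than the paper's. The paper works at the original variable first: it rewrites ${\mathscr D}_q^{2}\mathbb{H}_n(x;q)$ via \eqref{DqProp} together with the chain rule \eqref{cadrule}, which produces a factor $q$, then invokes the commutation rule \eqref{DqProOK} to pass from ${\mathscr D}_q{\mathscr D}_{q^{-1}}$ to ${\mathscr D}_{q^{-1}}{\mathscr D}_q$, producing a compensating $q^{-1}$, and only afterwards substitutes $x\mapsto q^{-1}x$. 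You substitute first and then apply \eqref{DqProp} twice --- once to $\mathbb{H}_n$ and once to $f={\mathscr D}_q\mathbb{H}_n$ --- which lands directly on $({\mathscr D}_{q^{-1}}{\mathscr D}_q\mathbb{H}_n)(x;q)$ with nothing to cancel, so neither \eqref{cadrule} nor \eqref{DqProOK} is ever needed. One small caveat: your closing remark that \eqref{DqProOK} ``would introduce a spurious factor of $q^{-1}$'' is accurate only within your ordering of steps; in the paper's ordering that $q^{-1}$ is exactly cancelled by the $q$ coming from \eqref{cadrule}, so both routes are sound. In sum, your argument buys a shorter derivation resting only on \eqref{DqProp} and the definition \eqref{eulerjackson}, whereas the paper's version makes visible how the two mixed operators ${\mathscr D}_q{\mathscr D}_{q^{-1}}$ and ${\mathscr D}_{q^{-1}}{\mathscr D}_q$ are related.
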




\begin{proof}
Combining \eqref{DqProp} with \eqref{SDEq1}, and then using \eqref{cadrule}
we get%
\begin{equation*}
q\overline{{\mathcal{R}}}_{n}(x){\mathscr D}_{q}{\mathscr D}_{q^{-1}}\mathbb{%
H}_{n}(xq;q)
\end{equation*}
\begin{equation*}
+{\mathcal{S}}_{n}(x){\mathscr D}_{q^{-1}}\mathbb{H}_{n}(xq;q)+{\mathcal{T}}%
_{n}(x)\mathbb{H}_{n}(x;q)=0,\quad n\geq 0,
\end{equation*}%
Then, using (\ref{DqProOK}), yields%
\begin{equation*}
qq^{-1}\overline{{\mathcal{R}}}_{n}(x){\mathscr D}_{q^{-1}}{\mathscr D}_{q}%
\mathbb{H}_{n}(xq;q)
\end{equation*}
\begin{equation*}
+{\mathcal{S}}_{n}(x){\mathscr D}_{q^{-1}}\mathbb{H}_{n}(xq;q)+{\mathcal{T}}%
_{n}(x)\mathbb{H}_{n}(x;q)=0,\quad n\geq 0,
\end{equation*}%
Thus%
\begin{equation*}
\overline{{\mathcal{R}}}_{n}(x){\mathscr D}_{q^{-1}}{\mathscr D}_{q}\mathbb{H%
}_{n}(xq;q)
\end{equation*}
\begin{equation*}
+{\mathcal{S}}_{n}(x){\mathscr D}_{q^{-1}}\mathbb{H}_{n}(xq;q)+{\mathcal{T}}%
_{n}(x)\mathbb{H}_{n}(x;q)=0,\quad n\geq 0,
\end{equation*}%
Next, replacing $x$ by $q^{-1}x$ we have%
\begin{equation*}
\overline{{\mathcal{R}}}_{n}(q^{-1}x){\mathscr D}_{q^{-1}}{\mathscr D}_{q}%
\mathbb{H}_{n}(x;q)
\end{equation*}
\begin{equation*}
+{\mathcal{S}}_{n}(q^{-1}x){\mathscr D}_{q^{-1}}\mathbb{H}_{n}(x;q)+{%
\mathcal{T}}_{n}(q^{-1}x)\mathbb{H}_{n}(q^{-1}x;q)=0,\quad n\geq 0.
\end{equation*}%
Finally, a rearrangement of the terms involved gives%
\begin{equation*}
\overline{{\mathcal{R}}}_{n}(q^{-1}x){\mathscr D}_{q^{-1}}{\mathscr D}_{q}%
\mathbb{H}_{n}(x;q)
\end{equation*}
\begin{equation*}
+\lbrack {\mathcal{S}}_{n}(q^{-1}x)+(q^{-1}-1)x{\mathcal{T}}_{n}(q^{-1}x)]{%
\mathscr D}_{q^{-1}}\mathbb{H}_{n}(x;q)+{\mathcal{T}}_{n}(q^{-1}x)\mathbb{H}%
_{n}(x;q)=0,\quad n\geq 0.
\end{equation*}
This allows to conclude the result.
\end{proof}



\section{Conclusions and further remarks}



In this paper, we have considered the $q$-Hermite I-Sobolev type polynomials
of higher order, and described important properties related to them such as
several structure relations, their hypergeometric representation derived
from the $q$-Hermite I polynomials, together with a three-term recurrence
relation of their elements and two $q$-difference equation satisfied by
their elements.

In this last section we present some examples and further remarks on these
families of polynomials.

First, let us fix the parameters defining one of the previous families of
orthogonal polynomials with $\alpha=3$, $q=3/5$. We also choose $j=2$ which
entails that the $q$-derivatives of second order evaluated at $\alpha$ are
involved in the definition of the sequence of polynomials.

The first elements in $H_{n}(x;q)$ are given by 
\begin{equation*}
H_0(x;q)=1,\qquad \mathbb{H}_1(x;q)=x,
\end{equation*}
\begin{equation*}
H_2(x;q)= x^2-\frac{2}{5},\qquad H_3(x;q)=x^3-\frac{98}{125}x,
\end{equation*}
\begin{equation*}
H_4(x;q)=x^4-\frac{3332}{3125}x^2+\frac{1764}{15625},
\end{equation*}
\begin{equation*}
H_5(x;q)= x^5-\frac{97988}{78125}x^3+\frac{2541924}{9765625}x,
\end{equation*}

whereas for every $\lambda>0$, the first elements in $\{\mathbb{H}%
_n(x;q)\}_{n\ge0}$ are given by 
\begin{equation*}
\mathbb{H}_0(x;q)=H_0(x;q),\qquad \mathbb{H}_1(x;q)=H_1(x;q),
\end{equation*}
\begin{equation*}
\mathbb{H}_2(x;q)= H_2(x;q),\qquad \mathbb{H}_3(x;q)\approx H_3(x;q)-\frac{%
9.408\lambda(8.707x^2-3.483)}{17.415\lambda+1},
\end{equation*}
\begin{equation*}
\mathbb{H}_4(x;q)\approx H_4(x;q)-\frac{36.679%
\lambda(277.663x^3+8.707x^2-217.687x-3.483)}{5015.349\lambda+1},
\end{equation*}
\begin{equation*}
\mathbb{H}_5(x;q)\approx H_5(x;q)-\frac{123.658%
\lambda(8686.316x^4+277.663x^3-9252.990x^2-217.687x+977.167)}{%
924614.128\lambda+1}.
\end{equation*}

We illustrate this sequence of orthogonal polynomials in the particular case
of $\lambda=3/5$ (see Figure~\ref{fig1}.



\begin{figure}[tbp]
\centering
\includegraphics[width=0.8\textwidth]{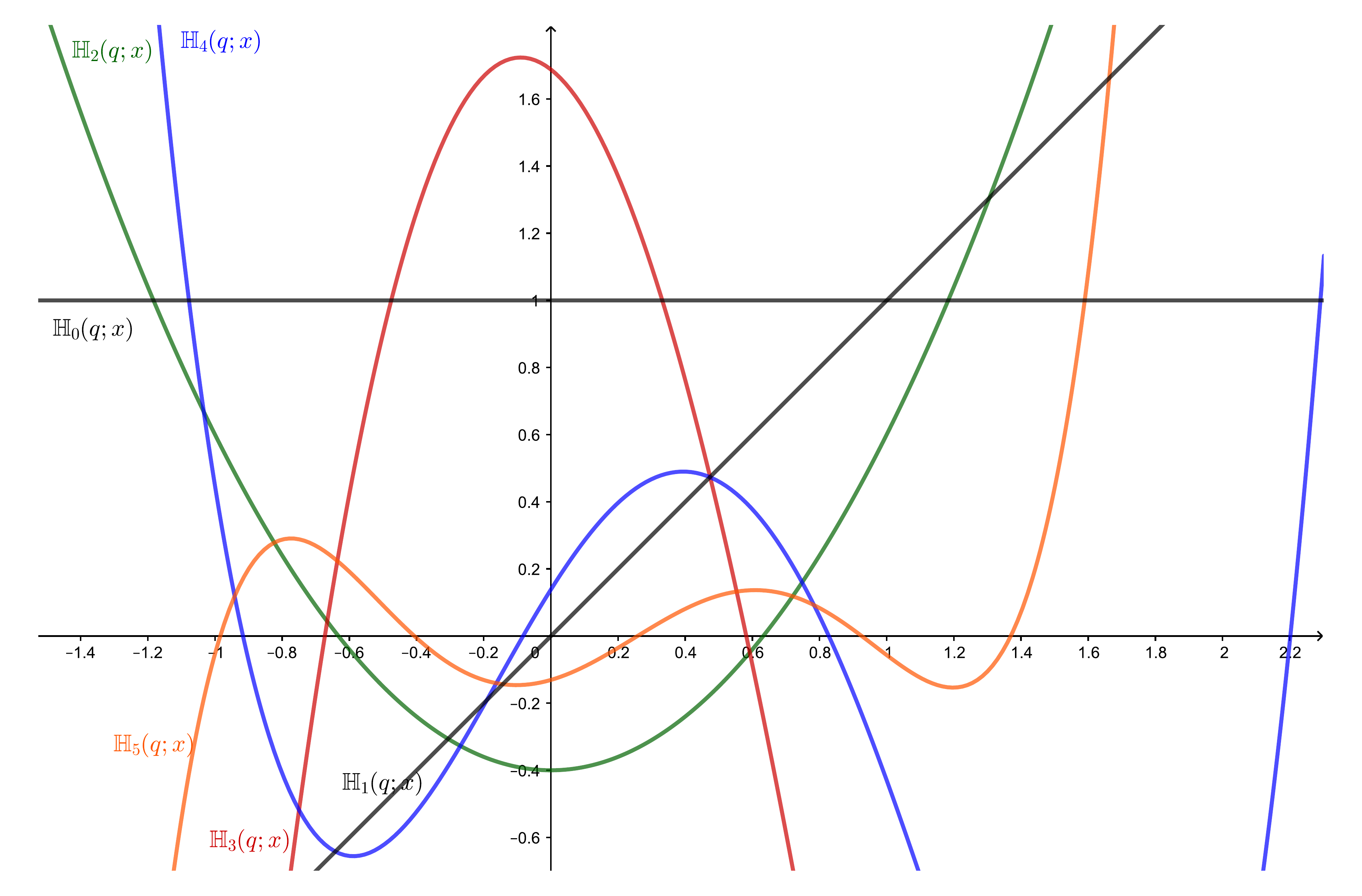} \label{fig1}
\caption{The polynomials $\mathbb{H}_k(x;q)$ for $k=0,\ldots,5$ under the
previous settings}
\end{figure}



Observe that the parity of the elements of the family is not maintained in
the Sobolev settings.

In addition to this, one can check that $\mathbb{H}_k(x;q)$ coincides with $%
H_k(x;q)$ for $k=0,1,2$. This is a general property which is worth remarking
and which comes from the very definition of the polynomials. Indeed, observe
from (\ref{piSob}) that the Sobolev part of the inner product does not
appear when considering functions $f$ or $g$ being polynomials of degree at
most $j-1$ because of the following general property:



\begin{lemma}
\label{e11} Let $p(x)\in\mathbb{R}[x]$ of degree at most $j-1$. Then ${%
\mathscr D}^{j}_{q}p\equiv 0$.
\end{lemma}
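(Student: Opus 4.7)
The plan is to reduce to the case of monomials by linearity of $\mathscr{D}_q$, then show that $\mathscr{D}_q$ lowers the degree of any monomial by exactly one, so that iterating it $j$ times on a polynomial of degree at most $j-1$ eventually kills it.

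First, I would verify directly from the definition \eqref{eulerjackson} that for every nonnegative integer $n$,
\begin{equation*}
\mathscr{D}_{q}(x^{n}) = \frac{(qx)^{n}-x^{n}}{(q-1)x} = \frac{q^{n}-1}{q-1}\,x^{n-1} = [n]_{q}\,x^{n-1},
\end{equation*}
with the convention that the right-hand side is $0$ when $n=0$ (consistent with $[0]_q=0$). In particular, $\mathscr{D}_q$ maps the space of polynomials of degree at most $m$ into the space of polynomials of degree at most $m-1$.

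Next, by a straightforward induction on $k$, it follows that for $0 \le n$,
\begin{equation*}
\mathscr{D}_{q}^{k}(x^{n}) = \begin{cases} [n]_{q}[n-1]_{q}\cdots[n-k+1]_{q}\, x^{n-k}, & k \le n, \\ 0, & k > n. \end{cases}
\end{equation*}
The case $k>n$ follows because after $n$ applications one obtains the constant $[n]_{q}!$, and a further application of $\mathscr{D}_q$ annihilates any constant.

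Finally, write $p(x) = \sum_{n=0}^{j-1} c_{n}\, x^{n}$. By linearity of $\mathscr{D}_{q}^{j}$ and the monomial computation above, each term $\mathscr{D}_{q}^{j}(x^{n})$ vanishes because $j > n$ for every $n$ in the sum. Therefore $\mathscr{D}_{q}^{j}p \equiv 0$, as claimed. There is no substantial obstacle here; the only point worth flagging is to be explicit that $[0]_q=0$ guarantees that $\mathscr{D}_q$ annihilates constants, which is what ultimately drives the result.
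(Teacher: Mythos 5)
Your proof is correct and takes essentially the same route as the paper: reduce to monomials by linearity and use the formula ${\mathscr D}_{q}x^{k}=[k]_{q}x^{k-1}$ (equivalently $\frac{1-q^{k}}{1-q}x^{k-1}$), so that $j$ iterations annihilate every monomial of degree at most $j-1$. You merely spell out the induction on iterates that the paper leaves implicit, which is a harmless elaboration rather than a different argument.
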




\begin{proof}
It is sufficient to prove the result for the monomials $p_k(x)=x^k$ for $%
k=0,1,\ldots,j-1$. We observe that ${\mathscr D}^{j}_{q}1\equiv 0$ and 
\begin{equation*}
{\mathscr D}_{q}x^k=\frac{1-q^{k}}{1-q}x^{k-1},\qquad k\ge 1.
\end{equation*}
This allows to conclude the result.
\end{proof}



Applying Lemma~\ref{e11}, we observe from the definition of the $n$-th
reproducing kernel associated to the $q$-Hermite I polynomials that $%
K_{j-1,q}^{(0,j)}(x,\alpha)\equiv0$. Therefore, in view of (\ref{ConxF1}) we
also obtain that $\mathbb{H}_j(x;q)=H_j(x;q)$.

As a consequence, we have the following result.



\begin{corollary}
$\mathbb{H}_k(x;q)\equiv H_k(x;q)$ for every $0\le k\le j$.
\end{corollary}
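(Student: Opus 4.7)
The plan is to combine the connection formula (\ref{ConxF1}) with Lemma~\ref{e11} to cover the range $0\le k\le j$ in two pieces, since two different vanishing mechanisms are in play.

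The boundary case $k=j$ has essentially been disposed of in the paragraph immediately preceding the statement: writing
$K_{j-1,q}^{(0,j)}(x,\alpha)=\sum_{i=0}^{j-1}H_i(x;q)\,\mathscr{D}_q^j H_i(\alpha;q)/||H_i||^2$
and applying Lemma~\ref{e11} to each summand (each $H_i$ has degree $i\le j-1$) annihilates the whole kernel, so (\ref{ConxF1}) at $n=j$ collapses to $\mathbb{H}_j=H_j$.

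For the remaining cases $0\le k\le j-1$, my preferred route is to observe that the Sobolev inner product reduces to the classical one on $\mathbb{R}_{j-1}[x]$. If $f,g$ both have degree at most $j-1$, Lemma~\ref{e11} gives $\mathscr{D}_q^j f\equiv\mathscr{D}_q^j g\equiv 0$, so the added Sobolev term in (\ref{piSob}) vanishes and $\langle f,g\rangle_\lambda=\langle f,g\rangle$. Since $\mathbb{H}_k$ is the unique monic polynomial of degree $k$ orthogonal under $\langle\cdot,\cdot\rangle_\lambda$ to $\{1,x,\ldots,x^{k-1}\}$, and for $k\le j-1$ all polynomials involved live in $\mathbb{R}_{j-1}[x]$, the defining orthogonality conditions coincide with those characterizing $H_k$. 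Uniqueness of monic orthogonal polynomials then forces $\mathbb{H}_k=H_k$. As a purely formal cross-check one could invoke (\ref{ConxF1}) directly: for $n<j$ the $q$-falling factorial coefficient $[n]_q^{(j)}=(q^{-n};q)_j(q-1)^{-j}q^{jn-\binom{j}{2}}$ contains the vanishing factor $1-q^{-n}q^{n}=0$ inside the Pochhammer product, so the Sobolev correction in (\ref{ConxF1}) is identically zero.

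No substantive obstacle is anticipated: the proof amounts to verifying that the two vanishing phenomena (the Pochhammer factor $[n]_q^{(j)}=0$ for $n<j$, together with $K_{j-1,q}^{(0,j)}\equiv 0$ at $n=j$) jointly cover the claimed range, which is essentially bookkeeping after Lemma~\ref{e11}.
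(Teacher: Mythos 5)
Your proof is correct and takes essentially the same route as the paper: for $0\le k\le j-1$ you use Lemma~\ref{e11} to kill the Sobolev term in (\ref{piSob}) so that the two inner products, and hence the monic orthogonal families, coincide on polynomials of degree at most $j-1$, and for $k=j$ you use the vanishing of the kernel $K_{j-1,q}^{(0,j)}(x,\alpha)$ in (\ref{ConxF1}), exactly as in the paragraph preceding the corollary. Your supplementary cross-check via the vanishing Pochhammer factor in $[n]_{q}^{(j)}$ for $n<j$ is a valid extra observation but does not change the argument.
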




Finally, we observe that the $q$-Hermite I polynomials are recovered when $%
\lambda=0$ for the $q$-Hermite I-Sobolev polynomials. Indeed, this can be
checked at many stages described in the work. It is direct to check that for
every $n\in\mathbb{N}$ the polynomial $\mathbb{H}_{n}(x;q)$ tends to $%
H_{n}(x;q)$ for $\lambda\to 0$ in view of (\ref{ConxF1}). Also, the
asymptotic behavior of the elements involved in the connection formulas when 
$\lambda$ approaches to 0 yields the result. It is straightforward to check
the asymptotic behavior of $\mathcal{E}_{k,n}$, $\mathcal{F}_{k,n}$ for $%
k=1,2$, and therefore of $\psi_n(x)$ for $\lambda\to 0$. In addition to
that, this asymptotic behavior can also be observed from the hypergeometric
representation of such polynomials.




\begin{thebibliography}{99}
\bibitem{AC1976} W. A. Al--Salam, T. S. Chihara, Convolutions of orthonormal
polynomials, \emph{SIAM J. Math. Anal.} \textbf{7}(1), 16--28 (1976).

\bibitem{A2005} M. Anshelevich, Linearization coefficients for orthogonal
polynomials using stochastic processes, \emph{Ann. Probab.} \textbf{33}(1),
114--136 (2005).

\bibitem{arvesu2013first} J. Arvesú and A. Soria--Lorente, First-order
non-homogeneous $q$-difference equation for Stieltjes function
characterizing $q$-orthogonal polynomials, \emph{J. Difference Equ. Appl.} 
\textbf{19}(5), 814--838 (2013).

\bibitem{B1995} H. Bavinck, On polynomials orthogonal with respect to an
inner product involving differences. \emph{J. Comput. Appl. Math.} \textbf{57%
}, 17--27 (1995).

\bibitem{B1995gen} H. Bavinck, On polynomials orthogonal with respect to an
inner product involving differences (The general case). \emph{Appl. Anal.} 
\textbf{59}, 233--240 (1995).

\bibitem{B1996} H. Bavinck, A difference operator of infinite order with the
Sobolev-type Charlier polynomials as eigenfunctions. \emph{Indag. Math.} 
\textbf{7}, 281--291 (1996).

\bibitem{B2012} N. Blitvic, The (q,t)-Gaussian process, \emph{J. Funct. Anal.%
} \textbf{263}(10), 3270--3305 (2012).

\bibitem{BMW2005} W. Bryc, W. Matysiak, P. J. Szab\l owski, Probabilistic
aspects of Al-Salam--Chihara polynomials. \emph{Proc. Amer. Math. Soc.} 
\textbf{133}(4), 1127--1134 (2005).

\bibitem{BMW2008} W. Bryc, W. Matysiak, J. Wesolowski, The bi-Poisson
process: a quadratic harness. \emph{Ann. Probab.} \textbf{36}(2), 623--646
(2008).

\bibitem{BWA2010} W. Bryc and J. Wesolowski, Askey--Wilson polynomials,
quadratic harnesses and martingales, \emph{Ann. Probab.} \textbf{38}(3),
1221--1262 (2010).

\bibitem{C1956} Carlitz, L. Some polynomials related to theta functions. 
\emph{Ann. Mat. Pura Appl.} \textbf{41}(4), 359--373 (1956).

\bibitem{C1957} Carlitz, L. Some polynomials related to Theta functions. 
\emph{Duke Math. J.} \textbf{24}, 521--527 (1957).

\bibitem{C1972} Carlitz, L. Generating functions for certain $q$-orthogonal
polynomials. \emph{Collect. Math.} \textbf{23}, 91--104 (1972).

\bibitem{costas2018analytic} R. S. Costas-Santos and A. Soria--Lorente,
Analytic properties of some basic hypergeometric-sobolev-type orthogonal
polynomials, \emph{J. Difference Equ. Appl.} \textbf{24}(11), 1715--1733
(2018).

\bibitem{ernst2007q} T. Ernst, $q$-Complex Numbers, A natural consequence of
umbral calculus. \emph{Uppsala University Department of Mathematics}, Report
44 (2007).

\bibitem{FLV1995} R. Floreanini, J. LeTourneux, L. Vinet, More on the $q$%
-oscillator algebra and $q$-orthogonal polynomials. \emph{J. Phys. A.} 
\textbf{28}(10), L287--L293 (1995).

\bibitem{FLV1997} R. Floreanini, J. LeTourneux, L. Vinet, Symmetry
techniques for the Al-Salam--Chihara polynomials. \emph{J. Phys. A.} \textbf{%
30}(9), 3107--3114 (1997).

\bibitem{hermoso2020second} C. Hermoso, E. J. Huertas, A. Lastra, and A.
Soria--Lorente, On second order $q$-difference equations satisfied by
Al-Salam--Carlitz I-Sobolev type polynomials of higher order. \emph{%
Mathematics}, \textbf{8}(8), 1300 (2020).

\bibitem{H1949} W. Hahn, Über Orthogonalpolynome, die $q$%
-Differenzengleichungen genügen. \emph{Math. Nachr.} \textbf{2}, 4--34
(1949).

\bibitem{HW1999} R. Hinterding, J. Wess, $q$-deformed Hermite polynomials in 
$q$-quantum mechanics, \emph{Eur. Phys. J. C}. \textbf{6}, 183--186 (1999).

\bibitem{HS2019} E. J. Huertas, A. Soria-Lorente, New analytic properties of
nonstandard Sobolev-type Charlier orthogonal polynomials. \emph{Numer.
Algorithms} \textbf{82}, 41--68 (2019).

\bibitem{ISV1987} M. E. H. Ismail, D. Stanton, G. Viennot, The combinatorics
of $q$-Hermite polynomials and the Askey-Wilson integral. \emph{European J.
Combin.} \textbf{8}(4), 379--392 (1987).

\bibitem{IRS1999} M. E. H. Ismail, M. Rahman, D. Stanton, Quadratic $q$%
-exponentials and connection coefficient problems. \emph{Proc. Amer. Math.
Soc.} \textbf{127}(10), 2931--2941 (1999).

\bibitem{KSZ2005} D. Kim, D. Stanton, J. Zeng, The combinatorics of the
Al-Salam-Chihara $q$-Charlier polynomials. \emph{Séminaire Lotharingien de
Combinatoire} \textbf{54} (2006), Art. B54i.

\bibitem{KLS2010} R. Koekoek, P. A. Lesky, and R. F. Swarttouw,
Hypergeometric orthogonal polynomials and their $q$-analogues. Springer
Science \& Business Media (2010).

\bibitem{MX2015} F. Marcellán, Y. Xu, On Sobolev orthogonal polynomials. 
\emph{Expos. Math.} \textbf{33}, 308--352 (2015).

\bibitem{MS2002} W. Matysiak, P. J. Szab\l owski, A few remarks on Bryc's
paper on random fields with linear regressions. \emph{Ann. Probab.} \textbf{%
30}(3), 1486--1491 (2002).

\bibitem{OS2008} S. Odake, R. Sasaki, $q$-oscillator from the $q$-Hermite
polynomial, \emph{Physics Letters B} \textbf{663}, 141--145 (2008).

\bibitem{OR1994} M. A. Olshanetsky and V-B. K. Rogov, Liouville quantum
mechanics on a lattice from geometry of quantum Lorentz group, \emph{J.
Phys. A} \textbf{27}(13), 4669--4683 (1994).

\bibitem{R1893-1} L. J. Rogers, On the expansion of certain infinite
products, \emph{Proc. London Math. Soc.} \textbf{24}, 337--352 (1893).

\bibitem{R1894-2} L. J. Rogers, Second memoir on the expansion of certain
infinite products, \emph{Proc. London Math. Soc.} \textbf{25}, 318--343
(1894).

\bibitem{R1895-3} L. J. Rogers, Third memoir on the expansion of certain
infinite products, \emph{Proc. London Math. Soc.} \textbf{26}, 15--32 (1895).

\bibitem{RK-2021M} C. S. Ryoo and J. Y. Kang, Some properties involving
q-Hermite polynomials arising from differential equations and location of
their zeros, \emph{Mathematics}, 9(11) 1168 (2021).

\bibitem{S2013} P. J. Szab\l owski, On the $q$-Hermite polynomials and their
relationship with some other families of orthogonal polynomials, \emph{%
Demonstratio Math.} \textbf{46}(4), 679--708 (2013).

\bibitem{Sz1926} G. Szeg\H{o}, Beitrag zur theorie der thetafunktione, \emph{%
Sitz Preuss. Akad. Wiss. Phys. Math. Ki.}, \textbf{XIX}: 242--252 (1926).

\bibitem{V2000} D. V. Voiculescu, Lectures on free probability theory, \emph{%
Lecture Notes in Math.} \textbf{1738}, 279--349 (2000).
\end{thebibliography}
\end{document}